\documentclass[12pt]{amsart}
\oddsidemargin=2mm
\evensidemargin=2mm
\textwidth=160mm
\textheight=195mm
\topmargin=0mm
\usepackage{amsmath,amssymb}

\newtheorem{thm}{Theorem}
\newtheorem{lem}[thm]{Lemma}

\newtheorem{cor}[thm]{Corollary}
\theoremstyle{definition}

\newtheorem{rem}[thm]{Remark}
\newcommand{\cH}{{\mathcal H}}
\newcommand{\cK}{{\mathcal K}}
\newcommand{\cL}{{\mathcal L}}
\newcommand{\cM}{{\mathcal M}}
\newcommand{\IB}{{\mathbb B}}

\newcommand{\IF}{{\mathbb F}}

\newcommand{\IN}{{\mathbb N}}

\DeclareMathOperator{\id}{id}
\DeclareMathOperator{\CB}{CB}
\DeclareMathOperator{\cb}{cb}
\newcommand{\OS}{\mathcal O \mathcal S}

\makeatletter
\def\freeprodsize@{\@setfontsize\freeprodsize\@xxpt\@xxpt}
\def\freeprod@{\mathop{\raisebox{-1pt}{\freeprodsize@ $\ast$}}}
\def\freeprod{\freeprod@\displaylimits}
\makeatother

\title{A continuum of $\mathrm{C}^*$-norms on $\IB(H)\otimes \IB(H)$ and related tensor products}
\author{Narutaka Ozawa}
\address{RIMS, Kyoto University, \mbox{606-8502} Japan}
\email{narutaka@kurims.kyoto-u.ac.jp}
\thanks{The first author was partially supported by JSPS (23540233)}
\author{Gilles Pisier}
\address{Texas A{\&}M University, College Station, TX 77843, U.S.A.\hspace*{\fill}\linebreak[4]
and\linebreak[4]\indent
Universit\'e Paris VI, IMJ, Equipe d'Analyse Fonctionnelle, 
Paris 75252, France}
\begin{document}
\begin{abstract}
For any pair $M,N$ of von Neumann algebras such that the algebraic
tensor product $M\otimes N$ admits more than one $\mathrm{C}^*$-norm,
the cardinal of the set of $\mathrm{C}^*$-norms is at least $ {2^{\aleph_0}}$.
Moreover there is
a    family 
  with cardinality $ {2^{\aleph_0}}$ of injective tensor product functors for $\mathrm{C}^*$-algebras in  Kirchberg's sense.
 Let $\IB=\prod_n M_{n}$.
We also show that,
for any non-nuclear  von Neumann algebra $M\subset \IB(\ell_2)$, the set of $\mathrm{C}^*$-norms on $\IB \otimes M$ has 
  cardinality equal to $2^{2^{\aleph_0}}$. 
\end{abstract}
\maketitle

A norm $\alpha$ on an involutive   algebra $A$ is called a $\mathrm{C}^*$-norm if it satisfies 
$$\forall x\in A\quad \alpha(x^*x)=\alpha(x)^2$$
in addition to $\alpha(x^*)=\alpha(x)$ and 
$\alpha(xy)\le \alpha(x)\alpha(y)$ for all $x,y\in A$.
After completion, $(A,\alpha)$ yields a $\mathrm{C}^*$-algebra.
While it is well known that $\mathrm{C}^*$-algebras have a unique $\mathrm{C}^*$-norm,
it is not so for   involutive algebras before completion. For example,  it is well known that
the algebraic tensor product $A\otimes B$ of two $\mathrm{C}^*$-algebras 
may admit distinct $\mathrm{C}^*$-norms, in particular  a minimal one
and a maximal one 
denoted respectively  by $\|\ \|_{\min}$ and $\|\ \|_{\max}$.
When two $\mathrm{C}^*$-norms on   $A\otimes B$ are equivalent, they must coincide
since the completion  has a unique $\mathrm{C}^*$-norm. The $\mathrm{C}^*$-algebras
$A$ such that $\|\ \|_{\min}=\|\ \|_{\max}$ on $A\otimes B$ 
 (or equivalently $A\otimes B$ has a unique $\mathrm{C}^*$-norm) 
 for any other $\mathrm{C}^*$-algebra
$B$ are called nuclear. 
Since they were introduced   in the 1950's, they have been extensively studied in the literature,
notably in the works of Takesaki, Lance, Effros and Lance, Choi and Effros, Connes,
Kirchberg, and many more.
We refer to \cite{Tak} or to \cite{BO} for an account of
these developments.

 In his 1976 paper \cite{Wa}, Simon Wassermann proved that $\IB(H)$ is not nuclear
when $H=\ell_2$ (or any infinite dimensional Hilbert space $H$).
Here $\IB(H)$ denotes the $\mathrm{C}^*$-algebra formed of all the bounded linear operators on   $H$.
 This left open the question whether $\|\ \|_{\min}=\|\ \|_{\max}$ on $\IB(H)\otimes \IB(H)$.
The latter was answered negatively in \cite{JP}. Curiously however the proofs in \cite{JP}
only establish the existence of two inequivalent $\mathrm{C}^*$-norms on $\IB(H)\otimes \IB(H)$,
namely the minimal and maximal ones, leaving open  the likely existence of many more, which is the main result of this note.

It follows from   \cite{JP} that the min and max norms
are not equivalent on     $M\otimes N$ for any pair $M,N$ of von Neumann algebras 
except
if either $M$ or $N$ is nuclear, in which case, of course,  the min and max norms
are equal. In \cite{Wa} Wassermann showed that a von Neumann algebra $M$ is nuclear iff it is 
 ``finite type I of bounded degree". This means that $M$ is (isomorphic to ) a  
 finite direct sum of tensor products of a commutative algebra with a matrix algebra.
 Equivalently, this means that $M$ does not contain the von Neumann algebra $\prod_n M_n$
 as a $\mathrm{C}^*$-subalgebra.

In the first part of this note, we prove that there is at least a continuum of  different 
(and hence inequivalent) $\mathrm{C}^*$-norms
on the algebraic tensor product $\IB(\ell_2)\otimes\IB(\ell_2)$.
 As a corollary, we obtain a continuum of
  injective tensor product functors for $\mathrm{C}^*$-algebras in the sense of Kirchberg
\cite{Ki}.

Let $\IB=\prod_n M_n$. This is the von Neumann algebra the unit ball of which is the product   of the unit balls
of  the matrix algebras $M_n$.
as 
The assertion that there are at least two distinct $\mathrm{C}^*$-norms on 
$\IB(H)\otimes \IB(H)$ (or on  $M\otimes N$ with $M,N$  not nuclear) reduces to the same assertion on $\IB\otimes\IB$, and this is used in \cite{JP}. 
It  turns out to be
 immediate to deduce from \cite{JP} (see Lemma \ref{tak2})
that the cardinality of the set of $\mathrm{C}^*$-norms on $\IB\otimes \IB$ is $\ge c$. 
Unfortunately, however, we do not see how to pass from
$\IB\otimes \IB$ to $\IB(H)\otimes \IB(H)$  in case of more than two $\mathrm{C}^*$-norms.
In any case 
 we will show in \S 2 that the cardinality
of the set of $\mathrm{C}^*$-norms on $\IB\otimes \IB$  (or  $\IB\otimes M$ with $M$ non-nuclear) is $2^{c}$ with $c$ denoting the continuum. 

We end this introduction with some background remarks.

 \begin{rem}\label{R1} It is easy to see that any unital simple $\mathrm{C}^*$-algebra is what algebraists call ``central simple".
  A unital algebra over a field  is called 
  central simple (or centrally simple) if it is simple and  its centre is reduced to the field of scalars. It is classical (see e.g. \cite[p. 151]{Cohn})  that the tensor
  product of two such algebras is again central simple, and a fortiori simple. 
  The kernel of a $\mathrm{C}^*$-seminorm on (the algebraic tensor product) $A\otimes B$ of two $\mathrm{C}^*$-algebras is clearly an ideal.
  Therefore, if $A,B$ are both simple and unital,  
  any $\mathrm{C}^*$-seminorm on (the algebraic tensor product) $A\otimes B$ is a norm
 as soon as it induces a norm on  each of its two factors.
     \end{rem}
      \begin{rem}\label{eas} 
   Let $I$ be a closed ideal in a $\mathrm{C}^*$-algebra $A$. It is well known
   that the maximal $\mathrm{C}^*$-norm is ``projective" in the following sense (see e.g. \cite[p. 92]{BO} or  \cite[p. 237]{P4}):\ for any
   other   $\mathrm{C}^*$-algebra $B$, $I \otimes_{\max} B$ embeds naturally (isometrically)
   in $A  \otimes_{\max} B$
   we have a natural (isometric) identification
   \begin{equation}\label{eq3}(A/I) \otimes_{\max} B= (A  \otimes_{\max} B)/ (I \otimes_{\max} B).\end{equation}
   
   Let $Q(H)=\IB(H)/K(H)$ be the Calkin algebra. By Kirchberg's well known work \cite{Ki} (see \cite[p. 289]{P4}
   or \cite[p. 105]{BO} for more details)
   a $\mathrm{C}^*$-algebra $A$ is exact iff
   \begin{equation}\label{eq4}Q(H)\otimes_{\min} A= (\IB(H)  \otimes_{\min} A)/ (K(H)\otimes_{\min} A).\end{equation}
   Note that $K(H)\otimes_{\min} A=K(H)\otimes_{\max} A$ since $K(H)$ is nuclear. Thus,
   by \eqref{eq3}, if  $A$ is not exact, the minimal and maximal $\mathrm{C}^*$-norms must differ on 
   $Q(H)\otimes A$.
    \end{rem} 
     
    \begin{rem}\label{eas3} Let $A,B,I$ be as in the preceding Remark.
    We can define a $\mathrm{C}^*$-norm on $(A/I)  \otimes  B$ by setting, for any $x\in (A/I)  \otimes  B$,
      \begin{equation}\label{eq00}\alpha(x)=\|x\|_{(A  \otimes_{\min} B)/ (I \otimes_{\min} B)}.\end{equation}
     More precisely, if $y\in A\otimes B$ is any element lifiting $x$ i.e. such that $ (q\otimes Id)(y)=x $
where $q:\ A\to A/I$ denotes the quotient map,
we have
 $$\alpha(x)=\inf\{ \|y+z\|_{\min}\mid z\in  I\otimes_{\min} B  \}.$$
 Since $(I\otimes_{\min} B ) \cap (A\otimes  B )=
 I\otimes B$, this is indeed a norm on $(A/I)\otimes B$.
         
         Let $G\subset B$ be any finite dimensional subspace. Then for any $x\in (A/I)\otimes G$
         we have   \begin{equation}\label{eq01}\alpha(x)=\inf\{ \|y \|_{\min}\mid y\in  A\otimes_{\min} G,  \  (q\otimes Id)(y)=x\}.\end{equation}
         Moreover, the infimum is actually attained. See \cite[\S 2.4]{P4}.

         Now assume that $I$ is   nuclear or merely such that 
    the min and max norms coincide on $I\otimes B$. Then     
     \begin{equation}\label{eq5}(A/I) \otimes_{\min} B= (A/I) \otimes_{\max} B\Rightarrow A  \otimes_{\min} B=A  \otimes_{\max} B.\end{equation}
     More precisely, it suffices   to assume that $\alpha=\|\ \|_{\max}$, i.e. we have 
      \begin{equation}\label{eq6}(A  \otimes_{\min} B)/ (I \otimes_{\min} B)= (A/I) \otimes_{\max} B\Rightarrow A  \otimes_{\min} B=A  \otimes_{\max} B.\end{equation}
          Indeed, this follows from \eqref{eq3} and     $I\otimes_{\min} B=I \otimes_{\max} B$.\\
         
 \end{rem} 
\section{ $\mathrm{C}^*$-norms on  $M \otimes N$  and $\IB(\ell_2) \otimes \IB(\ell_2)$}

We recall the operator space duality which states that $F\otimes_{\min}E^*\subset\CB(E,F)$
isometrically (see Theorem B.13 in \cite{BO} or 
  \cite[p. 40]{P4}).
Namely, for  any operator spaces $E, F$ and any
tensor $z=\sum_k f_k\otimes e^*_k\in F\otimes E^*$, the corresponding map $\varphi_z\colon E\to F$
given by $\varphi_z(x)=\sum_k e^*_k(x)f_k$ satisfies $\|z\|_{\min}=\|\varphi_z\|_{\cb}$.
For a finite dimensional operator space $E$, we denote by $\jmath_E$
the ``identity" element in $E\otimes E^*$.
We note that $\|\jmath_E\|_{\min}=1$ and that $\|\jmath_E\|_\alpha$ is independent of
embeddings $E\hookrightarrow\IB(\ell_2)$ and $E^*\hookrightarrow\IB(\ell_2)$.

For each $d\in\IN$, let $\OS_d$ denote the metric space of all $d$-dimensional operator spaces,
equipped with the cb Banach--Mazur distance.
We recall that by  \cite{JP}  the metric space $\OS_d$ is
non-separable whenever $d\geq3$.
If $A$ is a separable $\mathrm{C}^*$-alge\-bra, then the set $\OS_d(A)$ of all $d$-dimensional
operator subspaces of $A$ is a separable subset of $\OS_d$.

Let $M,N$ be any pair of non-nuclear von Neumann algebras, and let
$\alpha $ be a $\mathrm{C}^*$-norm on $M\otimes N$. Since $\IB$ embeds in both $M$ and $N$, 
any $E\in \OS_d$ admits a completely isometric embedding in both.
We denote by  $\cM_d^\alpha$   the subset of $\OS_d$ that consists of all $E\in\OS_d$
admitting  (completely isometric) realizations $E\subset M$
and $E^*\subset N$ with respect to which
$\|\jmath_E\|_\alpha=1$.\\
For example, one has $\cM_d^{\max}=\OS_d(\mathrm{C}^*\IF_\infty)$ (see \cite{JP}).

\begin{thm}\label{t1} Let $M,N$ be any pair of   von Neumann algebras such that
$M\otimes_{\min} N\not = M\otimes_{\max} N$.
For every $d\in\IN$ and every countable subset $\cL\subset\OS_d$,
there is a  $\mathrm{C}^*$-norm $\alpha$ on $M\otimes N$
such that $\cM_d^\alpha$ is separable and contains $\cL$.
Consequently, there is a  family of   $\mathrm{C}^*$-norms
on $M\otimes N$ with  the cardinality of the continuum.
\end{thm}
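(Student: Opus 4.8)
The plan is to treat the final ``Consequently'' assertion as a soft counting consequence of the first statement, and to concentrate the real work on the construction of $\alpha$.

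For the counting argument I would fix $d=3$, so that $\OS_d$ is non-separable by \cite{JP}, and choose a $\delta$-separated family $\{E_i\}_{i\in I}\subset\OS_d$ of cardinality $|I|=2^{\aleph_0}$ in the cb Banach--Mazur distance (such a family is exactly what the non-separability results of \cite{JP} provide). For each $i$ I apply the first assertion with $\cL=\{E_i\}$ to obtain a $\mathrm{C}^*$-norm $\alpha_i$ on $M\otimes N$ with $\cM_d^{\alpha_i}$ separable and $E_i\in\cM_d^{\alpha_i}$. The key observation is that a separable metric space cannot contain an uncountable $\delta$-separated subset (it is second countable, hence of countable cellularity). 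Thus for each fixed norm $\beta$ the fibre $\{i:\alpha_i=\beta\}$ maps injectively onto the $\delta$-separated set $\{E_i\}_{i\in I}\cap\cM_d^{\beta}$, which lies in the separable space $\cM_d^{\beta}$ and is therefore countable. Since $|I|=2^{\aleph_0}$ and every fibre is countable, the map $i\mapsto\alpha_i$ must have image of cardinality $2^{\aleph_0}$, yielding the asserted continuum of pairwise distinct $\mathrm{C}^*$-norms.

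The heart of the proof is the first assertion, for an arbitrary countable $\cL\subset\OS_d$. I would first fix, for every $E\in\cL$, completely isometric realizations $E\subset M$ and $E^*\subset N$; these exist because the hypothesis $M\otimes_{\min}N\ne M\otimes_{\max}N$ forces both $M$ and $N$ to be non-nuclear and hence to contain $\IB$, so that every member of $\OS_d$ is realizable in both, and with respect to any such realization $\|\jmath_E\|_{\min}=1$. This is also what makes the problem non-trivial: the min locus $\cM_d^{\min}=\OS_d$ is non-separable while the max locus $\cM_d^{\max}=\OS_d(\mathrm{C}^*\IF_\infty)$ is separable, and I must produce a norm interpolating these two behaviours. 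I would take $\alpha$ to be the largest $\mathrm{C}^*$-norm on $M\otimes N$ for which the fixed realizations of all members of $\cL$ keep $\jmath_E$ at norm one, i.e. the pointwise supremum of all $\mathrm{C}^*$-norms $\beta$ with $\min\le\beta\le\max$ satisfying $\|\jmath_E\|_\beta=1$ for each such realization. A supremum of $\mathrm{C}^*$-norms dominated by $\|\cdot\|_{\max}$ is again a $\mathrm{C}^*$-norm, and since $\|\cdot\|_{\min}$ already sits in this family the supremum is a genuine norm with $\|\jmath_E\|_\alpha=\sup_\beta\|\jmath_E\|_\beta=1$ for every $E\in\cL$; hence $\cL\subseteq\cM_d^\alpha$ holds by construction, and $\cM_d^\alpha\supseteq\cM_d^{\max}=\OS_d(\mathrm{C}^*\IF_\infty)$ automatically since $\alpha\le\max$.

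The main obstacle is the separability of $\cM_d^\alpha$: one must show that forcing only countably many prescribed spaces to norm one, while staying as close to $\max$ as possible, cannot drag uncountably many further spaces along. My plan is to reduce this to the quoted separability of $\OS_d(A)$ by exhibiting a single separable $\mathrm{C}^*$-algebra $A$ containing completely isometric copies of every $E\in\cL$ together with a copy of $\mathrm{C}^*\IF_\infty$, and proving the containment $\cM_d^\alpha\subseteq\OS_d(A)$. This is the finite-dimensional, intermediate-norm analogue of the identity $\cM_d^{\max}=\OS_d(\mathrm{C}^*\IF_\infty)$, and I expect it to require the operator-space duality $F\otimes_{\min}E^*=\CB(E,F)$ together with a perturbation and lifting argument that, for each $E$ with $\|\jmath_E\|_\alpha=1$, pins down a completely isometric copy of $E$ inside the separable $A$. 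Establishing that $\alpha$ is near-maximal enough for this confinement to hold, yet small enough to leave the prescribed $\cL$ at norm one, is where the delicate balance lies and is the step I expect to be genuinely hard.
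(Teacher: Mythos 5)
The genuine gap is exactly where you flagged it: the separability of $\cM_d^\alpha$, which is the entire content of the theorem, is never established, and your particular choice of $\alpha$ gives you no handle on it. Taking $\alpha$ to be the largest $\mathrm{C}^*$-norm keeping $\|\jmath_E\|_\alpha=1$ for all $E\in\cL$ is legitimate (a pointwise supremum of $\mathrm{C}^*$-norms dominated by $\max$ is again a $\mathrm{C}^*$-norm, and $\cL\subset\cM_d^\alpha$ by construction), but maximality by itself yields no confinement of $\cM_d^\alpha$ into $\OS_d(A)$ for any fixed separable $A$; you would still have to produce the ``perturbation and lifting argument'' you allude to, and that argument \emph{is} the theorem. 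The paper's proof consists precisely of building a norm for which the lifting is available: write $M\simeq A/I$ where $A=\freeprod_{E\in \cL} A_E \freeprod C^*(\IF)$, with $A_E\subset M$ a separable unital $\mathrm{C}^*$-subalgebra containing $E$ and $\IF$ a free group large enough that $C^*(\IF)$ maps onto $M$, and let $\alpha$ be the quotient norm \eqref{eq00} induced from $A\otimes_{\min}N$ through $Q\otimes\id$, where $Q\colon A\to M$ is the free product of the inclusions and of the quotient map. Then (i) $\alpha(\jmath_E)=1$ for $E\in\cL$ because the inclusion $E\to A_E\to A$ is a cb-norm-one lift of $\id_E$; (ii) conversely --- and this is the step missing from your sketch --- by \eqref{eq01}, for finite-dimensional $G=F^*\subset N$ the quotient norm on $(A/I)\otimes G$ is computed by liftings in $A\otimes_{\min}G$ with the infimum \emph{attained}, so $\alpha(\jmath_F)=1$ produces $z\in A\otimes F^*$ with $\|z\|_{\min}=1$ and $(Q\otimes\id)(z)=\jmath_F$, i.e.\ a complete contraction $\theta\colon F\to A$ with $Q\circ\theta=\id_F$, which forces $\theta$ to be completely isometric and hence $F\in\OS_d(A)$; (iii) $\OS_d(A)$ is separable because every $d$-dimensional subspace of $A$ already lies in the separable algebra $\freeprod_{E\in \cL} A_E \freeprod C^*(\IF_\infty)$. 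Note that once the paper's norm $\alpha_0$ exists, your maximal norm does work a posteriori: it dominates $\alpha_0$, and for $\mathrm{C}^*$-norms $\beta\geq\beta'$ one has $\cM_d^{\beta}\subseteq\cM_d^{\beta'}$, so $\cM_d^\alpha\subseteq\cM_d^{\alpha_0}\subseteq\OS_d(A)$; but this derivation presupposes the paper's construction, so the ``maximal'' choice buys nothing and the quotient--lifting mechanism cannot be bypassed.

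By contrast, your treatment of the ``Consequently'' part is correct and in fact cleaner than the paper's. The paper introduces $d'(E,F)=\max\{d_{cb}(E,\cM_d^{\alpha_F}),\,d_{cb}(F,\cM_d^{\alpha_E})\}$, extracts a maximal separated subfamily, and runs a pigeonhole argument against a countable dense union. Your argument --- each fibre of $i\mapsto\alpha_i$ injects, via $i\mapsto E_i$, into a separated subset of the separable space $\cM_d^{\alpha_i}$ and is therefore countable, whence the image has cardinality $2^{\aleph_0}$ --- reaches the same conclusion directly from the ccc property of separable metric spaces. So the counting step stands; the unproved separability statement it relies on is what remains to be supplied.
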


\begin{proof}
First note that our assumption ensures that $M,N$ are not nuclear and hence
(by \cite{Wa}) contain a copy of $\IB$.
For each $E\in \cL$, we may assume $E\subset M$ and $E^*\subset N$ completely isometrically.
 Let $A_E\subset M$ be a separable unital $\mathrm{C}^*$-subalgebra containing $E$ completely isometrically.
Let $\IF$ be a large enough free group so that $M$ is a quotient of $C^*(\IF)$.
Consider the $\mathrm{C}^*$-algebraic free product $$A=\freeprod_{E\in \cL} A_E \freeprod C^*(\IF).$$
Let $Q:\ A\to M$ denote the free product of the inclusions $A_E\subset M$ and
the quotient map $C^*(\IF)\to M$, and let $I=\ker(Q)$, so that we have $M\simeq A/I$.
Let $\alpha$ be the $\mathrm{C}^*$-norm defined in \eqref{eq00} with $B=N$.
Using $M\simeq A/I$ we view $\alpha$ as a norm on $M\otimes N$.
 Then for any $E\in \cL$, we have $\alpha(j_E)=1$. Indeed, the inclusion map
 $E\to A_E\to A$ has $cb$ norm 1 and hence defines an element $z\in A\otimes E^*$ 
 with $\|z\|_{\min}=1$   such that $(Q\otimes I) =j_E$.\\
 In the converse direction, let $F\subset M$ be any $d$-dimensional subspace such that,
 viewing $F^*\subset N$ we have
  $\alpha(j_F)=1$. Then, by \eqref{eq01} (applied to $G=F^*$) $j_F$ admits a lifting  $z\in A\otimes F^*$
  with  $\|z\|_{\min}=1$. This yields a completely isometric mapping $F\to A$, showing that $F$
  is completely isometric to a subspace of $A$, equivalently $F\in \OS_d(A)$. But it is easy to check that, for any $d$, the latter set is separable, since any $F\in \OS_d(A)$ is also a subspace of 
  $\freeprod_{E\in \cL} A_E  \freeprod C^*(\IF_\infty)$
  which is  separable (since we assume $\cL$ countable). Thus we have $\cL\subset \cM_d^\alpha$ and $\cM_d^\alpha$ is separable.

 For any $d$-dimensional  $E\subset M$  let $\alpha_E$ be the $\mathrm{C}^*$-norm associated to the singleton $\cL=\{E\}$,
 and let $\mathcal C_E= \cM_d^{\alpha_E}$, so that $E\in \mathcal C_E$.
 Let  $d'(E,F)=\max\{ d_{cb}(E,\mathcal C_F), d_{cb}(F,\mathcal C_E)\}$, where
  $d_{cb}(E,\mathcal C_F)=\inf \{d_{cb}(E,G)\mid  G\in \mathcal C_F\}$.
  By what precedes,  if $d'(E,F)>1$ then necessarily $\alpha_E\not=\alpha_F$ since
 $ \alpha_E(j_F)= \alpha_F(j_E)=1$ implies $d'(E, F)=1$.
  
  By \cite{JP}, for some $\varepsilon>0$, there is     a subset $\mathcal F\subset \OS_d$
 with cardinality $2^{\aleph_0}$ such that
 $d_{cb}(E,F)>1+\varepsilon$ for any $E\not=F\in \mathcal F$.
  Fix  $\xi$ such that $1<\xi<(1+\varepsilon)^{1/2}$.
Since all the $\mathcal  C_E$'s are   separable, we claim that there is a subset
 ${\mathcal F'}\subset {\mathcal F}$ still with cardinality $2^{\aleph_0}$ such that
 $d'(E,F)>\xi $ for any $E\not=F\in \mathcal F'$, and hence the set of $\mathrm{C}^*$-norms
  $\{\alpha_E\mid E\in \mathcal F' \}$ has cardinality $2^{\aleph_0}$.

 Indeed, let $\mathcal F'\subset {\mathcal F}$ be maximal with this property. Then for any $E\in   {\mathcal F}$
there is $F\in   {\mathcal F'}$ such that $d'(E,F)\le \xi$.  Now for any $E$ let $\mathcal  D_E\subset \mathcal  C_E$
be a dense countable subset. Let ${\mathcal F''}=\cup_{E\in {\mathcal F'}} D_E$.
 For any $E\in   {\mathcal F'}$, there is $G=f(E)\in {\mathcal F''}$ such that $d_{cb}(E,G)< (1+\varepsilon)^{1/2}$. 
This defines a function $f:\   {\mathcal F}\to {\mathcal F''}$. 
Assume by contradiction that
 $|\mathcal F'|<|{\mathcal F}|=2^{\aleph_0}$, then also  $|\mathcal F''|<|{\mathcal F}|$,
 and hence the function cannot be injective (``pigeon hole"). Therefore there are $E\not=F\in {\mathcal F}$ such that
 $f(E)=f(F)$ and hence   $d_{cb}(E,F)\le d_{cb}(E,f(E))d_{cb}(F,f(E))<1+\varepsilon$ and we reach a contradiction,
 proving the claim. 
  Thus we obtain a family of $\mathrm{C}^*$-norms $\{\alpha_E\mid E\in \mathcal F'\}$ with cardinality $2^{\aleph_0}$.  \end{proof}

We now turn to admissible norms on $\IB(\ell_2) \otimes \IB(\ell_2)$.
 
We say a $\mathrm{C}^*$-norm $\|\,\cdot\,\|_\alpha$ on $\IB(\ell_2)\otimes \IB(\ell_2)$
is \emph{admissible} if it is invariant under the flip and tensorizes unital completely positive
maps (i.e., for every unital completely positive maps $\varphi\colon \IB(\ell_2)\to \IB(\ell_2)$ the
corresponding map $\varphi\otimes\id$ extends to a completely positive map on
the $\mathrm{C}^*$-algebra $\IB(\ell_2)\otimes_\alpha\IB(\ell_2)$).
Let an admissible $\mathrm{C}^*$-norm $\|\,\cdot\,\|_\alpha$ be given.
We note that for every completely bounded map $\psi$ on $\IB(\ell_2)$ one has
\[
\|\psi\otimes\id\colon\IB(\ell_2)\otimes_\alpha\IB(\ell_2)\to\IB(\ell_2)\otimes_\alpha\IB(\ell_2)\|_{\cb}
=\|\psi\|_{\cb}
\]
(and likewise for  $\id\otimes\psi$),
since $\psi$ can be written as $\|\psi\|_{\cb}S_1^*\varphi(S_1\,\cdot\,S_2^*)S_2$
for some unital completely positive map $\varphi$ on $\IB(\ell_2)$ and isometries
$S_1,S_2$ on $\ell_2$ (see 
Theorem 1.6 in \cite{P4}).

We recall that the density character of a metric space $X$ is the smallest cardinality
of a dense subset. Let $\mathfrak{c}$ denote the cardinality of the continuum.

\begin{lem}
Let $\cH$ be the Hilbert space with density character $\mathfrak{c}$
and consider $\ell_2\subset\cH$. Accordingly, let $\IB(\ell_2)\subset\IB(\cH)$
(non-unital embedding) and $\theta\colon\IB(\cH)\to\IB(\ell_2)$ be the compression.
Then for every unital completely positive map $\varphi\colon\IB(\ell_2)\to\IB(\ell_2)$,
there are a $*$-homo\-mor\-phism $\pi\colon\IB(\cH)\to\IB(\cH)$ and an isometry
$V\in\IB(\ell_2,\cH)$ such that $\varphi(\theta(a))=V^*\pi(a)V$ for every $a\in\IB(\cH)$.
\end{lem}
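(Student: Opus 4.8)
The plan is to realize $\varphi\circ\theta$ as a compression of a $*$-homomorphism by means of the minimal Stinespring dilation, and then to check that the dilation Hilbert space is no larger than $\cH$, so that it can be placed inside $\cH$.

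First I would write $e\colon\ell_2\to\cH$ for the inclusion isometry, so that $\theta(a)=e^*ae$ and $\theta$ is unital completely positive; hence $\Phi:=\varphi\circ\theta\colon\IB(\cH)\to\IB(\ell_2)$ is unital completely positive. By Stinespring's theorem in its minimal (GNS) form there are a Hilbert space $\cK$, a unital $*$-representation $\rho\colon\IB(\cH)\to\IB(\cK)$ and an isometry $W\in\IB(\ell_2,\cK)$ with $\Phi(a)=W^*\rho(a)W$ and $\cK=\overline{\rho(\IB(\cH))W\ell_2}$. Granting, as I explain below, that the Hilbert dimension of $\cK$ is at most that of $\cH$, I would fix an isometry $j\in\IB(\cK,\cH)$ and set $\pi(a):=j\rho(a)j^*$ and $V:=jW$. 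Since $j^*j=\id_{\cK}$, the map $\pi$ is a (non-unital) $*$-homomorphism $\IB(\cH)\to\IB(\cH)$, $V$ is an isometry in $\IB(\ell_2,\cH)$, and $V^*\pi(a)V=W^*\rho(a)W=\varphi(\theta(a))$, as required.

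The heart of the matter is the size estimate on $\cK$. In the GNS picture $\cK$ is the separated completion of $\IB(\cH)\odot\ell_2$ for the form $\ip{a\otimes\xi,\,b\otimes\eta}=\ip{\xi,\Phi(a^*b)\eta}=\ip{\xi,\varphi\big((ae)^*(be)\big)\eta}$, where I used $\theta(a^*b)=e^*a^*be=(ae)^*(be)$. Thus this form, and hence the class of $a\otimes\xi$ in $\cK$, depends on $a$ only through $ae\in\IB(\ell_2,\cH)$; moreover $a\mapsto ae$ maps $\IB(\cH)$ onto $\IB(\ell_2,\cH)$ (given $T$, take $a=Te^*$). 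Since $\cH$ has cardinality $\mathfrak c$ and $\ell_2$ is separable, a bounded operator $\ell_2\to\cH$ is determined by its restriction to a countable dense subset of $\ell_2$, so $|\IB(\ell_2,\cH)|=\mathfrak c$. Letting $\xi$ run through a countable dense subset of $\ell_2$ and $ae$ through $\IB(\ell_2,\cH)$ therefore yields a total subset of $\cK$ of cardinality $\mathfrak c$; passing to rational linear combinations shows the density character of $\cK$, equivalently its Hilbert dimension, is at most $\mathfrak c=\dim\cH$. This is exactly what the first paragraph needed.

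The step I expect to be the only genuine obstacle is this dimension count, and in particular the temptation to bound $\cK$ by the size of $\IB(\cH)$ itself: $\rho$ is a representation of the enormous algebra $\IB(\cH)$ (of norm density character $2^{\mathfrak c}$), and the crude estimate from $\cK=\overline{\rho(\IB(\cH))W\ell_2}$ would only give $\dim\cK\le 2^{\mathfrak c}$, far too large to fit inside $\cH$. What rescues the lemma is the factorization through the compression $\theta$, which collapses the dependence on $a\in\IB(\cH)$ to a dependence on the single operator $ae\in\IB(\ell_2,\cH)$ of controllable size. I would be careful to verify the cardinal arithmetic ($|\cH|=\mathfrak c^{\aleph_0}=\mathfrak c$ and $|\IB(\ell_2,\cH)|=\mathfrak c^{\aleph_0}=\mathfrak c$) and to confirm that density character equals Hilbert dimension for the infinite-dimensional space $\cK$, so that the isometric embedding $j\colon\cK\hookrightarrow\cH$ indeed exists.
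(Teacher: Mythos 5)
Your proposal is correct and is essentially the paper's own proof: a minimal Stinespring dilation of $\varphi\circ\theta$, the key collapse from $\IB(\cH)$ down to $\IB(\ell_2,\cH)$ (the paper derives it from $\varphi(\theta(P_{\ell_2}))=1$, hence $\pi(P_{\ell_2})V=V$, while you read the same fact off the GNS form $\ip{a\otimes\xi,b\otimes\eta}=\ip{\xi,\varphi((ae)^*(be))\eta}$), and a density-character bound of $\mathfrak{c}$ for $\IB(\ell_2,\cH)$. Your cardinality count for $\IB(\ell_2,\cH)$ and your explicit isometry $j\colon\cK\hookrightarrow\cH$ with $\pi=j\rho(\cdot)j^*$ are minor, equally valid variants of the paper's decomposition $\IB(\ell_2,\cH)=\bigcup_{J}\IB(\ell_2,\ell_2(J))$ over countable $J$ and its identification of $\cK$ with $\cH$.
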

\begin{proof}
By Stinespring's Dilation Theorem (see   \cite[p. 24]{P4} or  \cite[p. 10]{BO}), 
there are a $*$-repre\-sen\-tation $\pi$ of $\IB(\cH)$ on a Hilbert space $\cK$ 
and an isometry $V\in\IB(\ell_2,\cK)$ such that $\varphi(\theta(a))=V^*\pi(a)V$ 
for every $a\in\IB(\cH)$. We may assume that $\pi(\IB(\cH))V\ell_2$ is dense in $\cK$. 
Since $\varphi(\theta(P_{\ell_2}))=1$, one has $\pi(\IB(\cH))V\ell_2=\pi(\IB(\ell_2,\cH))V\ell_2$.
We claim that the density character of $\IB(\ell_2,\cH)$ is $\mathfrak{c}$. 
Indeed, if we write $\cH=\ell_2(I)$ with $|I|=\mathfrak{c}$, then 
$\IB(\ell_2,\cH)=\bigcup_{J\in [I]^{\IN}}\IB(\ell_2,\ell_2(J))$,
where $[I]^{\IN}$ is the family of countable subsets of $I$.
Since $|[I]^{\IN}|=\mathfrak{c}$ and $\IB(\ell_2)$ has density character $\mathfrak{c}$, 
our claim follows. It follows that $\cK$ has density character $\mathfrak{c}$ and hence 
we may identify $\cK$ with $\cH$. 
\end{proof}
Note that, when $\alpha$ is admissible, $\cM_d^\alpha$ is a closed subset of $\OS_d$.

\begin{thm}\label{t1}
For every $d\in\IN$ and every separable subset $\cL\subset\OS_d$,
there is an admissible $\mathrm{C}^*$-norm $\alpha$ on $\IB(\ell_2)\otimes \IB(\ell_2)$
such that $\cM_d^\alpha$ is separable and contains $\cL$.
Consequently, there is a  family of admissible $\mathrm{C}^*$-norms
on $\IB(\ell_2)\otimes \IB(\ell_2)$ with  the cardinality of the continuum.
\end{thm}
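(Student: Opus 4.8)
The plan is to reproduce the two--step pattern of the preceding theorem. The second, ``consequently'' part is unchanged: once the first assertion is available, applying it to each singleton $\cL=\{E\}$ produces an admissible norm $\alpha_E$ with $E\in\mathcal C_E:=\cM_d^{\alpha_E}$ separable, and since $\alpha_E(\jmath_F)=\alpha_F(\jmath_E)=1$ still forces $d'(E,F)=1$, the maximal $\xi$--separated subfamily $\mathcal F'\subset\mathcal F$ extracted from \cite{JP} by the same pigeon--hole count yields $2^{\aleph_0}$ pairwise distinct admissible norms. So the whole content is the first assertion, whose only new feature over the previous theorem is admissibility.

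To build $\alpha$ I would realize both copies of $\IB(\ell_2)$ as the corner $P\IB(\cH)P$ of $\IB(\cH)$, with $\cH$ of density character $\mathfrak c$, corner inclusion $\iota\colon\IB(\ell_2)\to\IB(\cH)$ and compression $\theta\colon\IB(\cH)\to\IB(\ell_2)$, so that $\theta\iota=\id$. I would first manufacture a $\mathrm C^*$--norm $\gamma$ on $\IB(\cH)\otimes\IB(\cH)$ that is flip--invariant and tensorizes $*$--homomorphisms $\pi\colon\IB(\cH)\to\IB(\cH)$ on either leg, and then set $\alpha=\gamma\circ(\iota\otimes\iota)$, the restriction of $\gamma$ to the corner $(P\otimes P)(\IB(\cH)\otimes\IB(\cH))(P\otimes P)$; this is automatically a flip--invariant $\mathrm C^*$--norm. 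The reason to pass through $\IB(\cH)$ is the Lemma: every unital completely positive $\varphi$ on $\IB(\ell_2)$ satisfies $\varphi\theta=V^*\pi(\,\cdot\,)V$ for a $*$--homomorphism $\pi$ of $\IB(\cH)$ and an isometry $V$, which lets me factor
\[
\varphi\otimes\id=\bigl((V^*\,\cdot\,V)\otimes\theta\bigr)\circ(\pi\otimes\id)\circ(\iota\otimes\iota).
\]
Here $\iota\otimes\iota$ is an isometric $*$--homomorphism into $(\IB(\cH)\otimes\IB(\cH),\gamma)$, $\pi\otimes\id$ is completely positive by the chosen functoriality of $\gamma$, and the outer map is a tensor product of two compressions by isometries; it is completely positive from $\gamma$ back to $\alpha$ once one checks, using the invariance of $\gamma$ under $\mathrm{Ad}(u)\otimes\id$ for unitaries $u\in\IB(\cH)$ (itself a case of $*$--homomorphism tensorization), that the two $\ell_2$--sized corners it involves carry the same norm. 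Composing shows $\varphi\otimes\id$ is completely positive on $\IB(\ell_2)\otimes_\alpha\IB(\ell_2)$, i.e.\ $\alpha$ is admissible.

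It remains to construct $\gamma$ so that it is functorial in the above sense yet $\cM_d^\gamma$ is separable and contains a prescribed countable dense subset $\cL_0\subset\cL$. I would start from the quotient norm of the previous theorem, $\gamma_0(x)=\|x\|_{(A\otimes_{\min}\IB(\cH))/(I\otimes_{\min}\IB(\cH))}$ with $A=\freeprod_{E\in\cL_0}A_E\freeprod C^*(\IF)$ and $\IB(\cH)\simeq A/I$, which already has $\cM_d^{\gamma_0}$ separable and containing $\cL_0$ but is neither flip--invariant nor functorial; and then take $\gamma$ to be the largest flip--invariant, $*$--homomorphism--tensorizing norm dominated by $\gamma_0$. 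Since the supremum of a family of $\mathrm C^*$--norms bounded by $\|\,\cdot\,\|_{\max}$ is again a $\mathrm C^*$--norm, and these properties pass to suprema (via direct sums of the associated representations), this $\gamma$ exists and is functorial. Domination gives $\cM_d^{\gamma_0}\subseteq\cM_d^{\gamma}$, so $\cL_0$ is still captured; passing to the corner and invoking the closedness of $\cM_d^\alpha$ for admissible $\alpha$ then upgrades $\cL_0$ to all of $\cL$.

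The main obstacle is exactly the separability of $\cM_d^\alpha$ after this ``admissibilization''. Demanding that $\gamma$ tensorize all $*$--homomorphisms pushes it toward $\|\,\cdot\,\|_{\max}$ (indeed, deleting the $A_E$ already gives $\|\,\cdot\,\|_{\max}$, since $C^*(\IF)$ has the local lifting property and so $C^*(\IF)\otimes_{\min}\IB(\cH)=C^*(\IF)\otimes_{\max}\IB(\cH)$), which would make $\cM_d$ too small to contain $\cL_0$; conversely the $A_E$ factors that guarantee $\cL_0\subset\cM_d$ are precisely what obstruct lifting of $*$--homomorphisms through $A$. The two demands are reconciled only by using the Lemma quantitatively: because every unital completely positive map on $\IB(\ell_2)$ is routed through a single fixed $\IB(\cH)$ by $*$--homomorphisms and isometries, the admissibility constraints displace a $d$--dimensional realization only within separable corner structure, so the completely--positive orbit of the separable family $\cM_d^{\gamma_0}$ remains separable. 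Making this precise --- that enlarging $\cM_d^{\gamma_0}$ to $\cM_d^{\gamma}$ costs at most a separable amount --- is the crux of the proof.
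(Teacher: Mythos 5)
Your reduction of admissibility to the Lemma (factoring a unital completely positive map on $\IB(\ell_2)$ as $V^*\pi(\cdot)V$ with $\pi$ a $*$-homomorphism of $\IB(\cH)$, then handling $\pi\otimes\id$ and the compression separately) is exactly the right mechanism, and it is the one the paper uses. But your construction of the functorial norm $\gamma$ has a genuine gap, and it is precisely the one you flag yourself as ``the crux'': the separability of $\cM_d^\gamma$. Defining $\gamma$ as the largest flip-invariant, $*$-homomorphism-tensorizing $\mathrm{C}^*$-norm dominated by $\gamma_0$ gives you existence (the family contains $\min$ and the properties pass to suprema), but it gives you no control whatsoever on $\cM_d^\gamma$ from above. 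Domination only yields $\cM_d^{\gamma_0}\subseteq\cM_d^\gamma$, i.e.\ a lower bound on $\cM_d^\gamma$; to bound it from above you would need to exhibit at least one norm $\beta$ \emph{in your family} with $\cM_d^\beta$ separable, since then $\beta\le\gamma$ forces $\cM_d^\gamma\subseteq\cM_d^\beta$. In the worst case your supremum could simply be $\|\cdot\|_{\min}$, for which $\cM_d^{\min}=\OS_d$ is non-separable, and the first assertion of the theorem fails. So the abstract maximality argument does not reconcile the two demands; it merely restates the problem of producing one good norm, which is the entire content of the theorem. The heuristic that ``the completely-positive orbit of the separable family $\cM_d^{\gamma_0}$ remains separable'' is not a proof and, as stated, has no argument behind it.

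The missing idea, which is how the paper resolves the tension, is to build the quotient norm not from $A$ itself but from the free product over \emph{all} representations: take $A=\mathrm{C}^*F_\infty\ast\freeprod_{\IN}A_0$ (with $\OS_d(A_0)$ dense in $\cL\cup\cL^*$), let $\{\sigma_i\}$ be the set of all unital $*$-homomorphisms $A\to\IB(\cH)$, put $\tilde{A}=\freeprod_i A$ and $\sigma=\freeprod_i\sigma_i\colon\tilde{A}\to\IB(\cH)$ (surjective because $\IB(\cH)$ is a quotient of one copy of $\mathrm{C}^*F_\infty$, hence of $A$), and let $\beta$ be the quotient norm induced from $\tilde{A}\otimes_{\min}\IB(\ell_2)$. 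This single construction delivers both properties at once. First, any $*$-endomorphism $\pi$ of $\IB(\cH)$ acts on the index set by $\sigma_i\mapsto\pi\circ\sigma_i$, hence lifts to a $*$-endomorphism $\tilde{\pi}$ of $\tilde{A}$ with $\sigma\circ\tilde{\pi}=\pi\circ\sigma$; this is what makes $\pi\otimes\id$ bounded for $\beta$ and, combined with the Lemma, gives admissibility (flip-invariance is then restored at the end by $\alpha=\max\{\beta,\beta^{\mathrm{op}}\}$, as in your plan). Second, separability survives because $\OS_d(\tilde{A})=\OS_d(A)$: any $d$-dimensional subspace of $\tilde{A}$ sits in a countable sub-free-product, and $\freeprod_{\IN}A\simeq A$ by the choice of $A$ as an infinite free product; since $\|\jmath_E\|_\beta=1$ forces a completely isometric lift of $E$ into $\tilde{A}$ (attainment of the infimum in the quotient norm on finite-dimensional slices), one gets $\cM_d^\beta\subseteq\OS_d(A)$, which is separable. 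Your route could only be completed by constructing such a $\beta$ and inserting it into your family --- at which point the maximality device is superfluous.
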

\begin{proof} 
Let $\cL^*=\{ E^* : E\in\cL\}$ and take a separable unital $\mathrm{C}^*$-alge\-bra $A_0$
such that $\OS_d(A_0)$ contains a dense subset of $\cL\cup\cL^*$.
Let $A=\mathrm{C}^*F_\infty\ast\freeprod_{\IN} A_0$ be the unital full free product of
the full free group algebra $\mathrm{C}^*F_\infty$ and countably many copies of $A_0$.
Let $\{ \sigma_i \}$ be the set of all unital $*$-homo\-mor\-phisms from $A$ into $\IB(\cH)$
and $\sigma=\freeprod_i\sigma_i$ be the $*$-homo\-mor\-phism from $\tilde{A}=\freeprod_i A$ to $\IB(\cH)$,
which is surjective. Note that $\OS_d(\tilde{A})=\OS_d(A)$ and hence it is separable.
Denote $J=\ker\sigma$. As in \eqref{eq00}, we induce the $\mathrm{C}^*$-norm $\beta$
on $\IB(\cH)\otimes\IB(\ell_2)$ from $\tilde{A}\otimes_{\min}\IB(\ell_2)$ through $\sigma\otimes\id$,
i.e., for every $z\in \IB(\cH)\otimes\IB(\ell_2)$ one defines
\[
\|z\|_\beta=\inf\{ \|\tilde{z}\|_{\tilde{A}\otimes_{\min}\IB(\ell_2)} : (\sigma\otimes\id)(\tilde{z})=z\}.
\]
Since the infimum s attained,  there is a lift $\tilde{z}\in A\otimes F$ such that $\|\tilde{z}\|_{\min}=\|z\|_\beta$.

Consider $\ell_2\hookrightarrow\cH$ and restrict $\beta$
to $\IB(\ell_2)\otimes\IB(\ell_2)$, which is still denoted by $\beta$.
We claim that for every unital completely positive $\varphi$ on $\IB(\ell_2)$,
the corresponding maps $\varphi\otimes\id$ and $\id\otimes\varphi$ are completely
positive on $\IB(\ell_2)\otimes_\beta\IB(\ell_2)$.
The latter is trivial. For the former, we use the above lemma.
The $*$-homo\-mor\-phism $\pi$ on $\IB(\cH)$ induces a map on $\{\sigma_i\}$ and
thus a $*$-homo\-mor\-phism $\tilde{\pi}$ from $\tilde{A}$ into $\tilde{A}$ such that
$\sigma\circ\tilde{\pi}=\pi\circ\sigma$.
It follows that $\pi\otimes\id$ is a continuous $*$-homomorphism on
$\IB(\cH)\otimes_\beta\IB(\ell_2)$ and hence that $\varphi\otimes\id$ is completely
positive.

We note that $\beta=\min$ on $E\otimes\IB(\ell_2)$ for any $E\in\OS_d(A)$.
Let $E\subset\IB(\ell_2)\subset\IB(\cH)$ and consider the element
$\jmath_E\in E\otimes E^*\subset \IB(\cH)\otimes \IB(\ell_2)$.
If $\|\jmath_E\|_\beta=1$, then $\id_E\colon E\to\IB(\ell_2)$ has a completely contractive
lift into $\tilde{A}$. Indeed, an isometric lifting $\tilde{\jmath}_E \in\tilde{A}\otimes_{\min} E^*$
corresponds to a complete contraction $\theta\colon E\to \tilde{A}$ for which
$\sigma\circ\theta=\id_E\colon E\hookrightarrow\IB(\cH)$. It follows that $\cM_d^\beta\subset\OS_d(A)$.
Finally, take the flip $\beta^{\mathrm{op}}$ of $\beta$ and let $\alpha=\max\{\beta,\beta^{\mathrm{op}}\}$.
\end{proof}

We recall that a \emph{tensor product functor} is a bifunctor $(A,B)\mapsto A\otimes_\alpha B$ which 
assigns in a functorial way a $\mathrm{C}^*$-comple\-tion of each algebraic tensor product 
$A\otimes B$ of $\mathrm{C}^*$-alge\-bras $A$ and $B$. It is said to be \emph{injective} if 
$A_0\hookrightarrow A_1$ and $B_0\hookrightarrow B_1$ gives rise to a faithful embedding 
$A_0\otimes_\alpha B_0\hookrightarrow A_1\otimes_\alpha B_1$. See \cite{Ki}.
For example, the spatial tensor product functor $\min$ is injective, while the maximal one $\max$ is not.

\begin{cor}
There is a     family 
  with cardinality $ {2^{\aleph_0}}$     of  different injective tensor product functors. 
\end{cor}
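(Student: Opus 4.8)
The plan is to turn each admissible $\mathrm{C}^*$-norm $\alpha$ on $\IB(\ell_2)\otimes\IB(\ell_2)$ produced by Theorem~\ref{t1} into an injective tensor product functor $\otimes_\alpha$, and then to observe that the assignment $\alpha\mapsto\otimes_\alpha$ is injective, so that the continuum of admissible norms furnishes a family of injective functors of the same cardinality. The construction is Kirchberg's correspondence between tensor functors and norms on the single algebra $\IB(\ell_2)\otimes\IB(\ell_2)$, and the whole point of the admissibility hypothesis is precisely to make it go through.

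First I would define the functor on pairs of separable $\mathrm{C}^*$-algebras. Given separable $A,B$, choose faithful representations $A\subset\IB(\ell_2)$ and $B\subset\IB(\ell_2)$ (reducing to the unital case by adjoining units), and set $\|z\|_{\otimes_\alpha}=\|z\|_\alpha$ for $z\in A\otimes B\subset\IB(\ell_2)\otimes\IB(\ell_2)$. The crux is representation-independence. If $A$ is represented in two ways on $\ell_2$, the resulting $\ast$-isomorphism between the two copies extends, by Arveson's extension theorem, to a unital completely positive map $\Phi$ on $\IB(\ell_2)$; since $\alpha$ is admissible, $\Phi\otimes\id$ is completely positive, hence contractive, on $\IB(\ell_2)\otimes_\alpha\IB(\ell_2)$, which gives one inequality between the two norms on $A\otimes B$, the reverse following by symmetry. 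The same argument applied to $B$ uses $\id\otimes\Psi$, which is completely positive because $\alpha$ is flip-invariant. Thus $\|\cdot\|_{\otimes_\alpha}$ is a well-defined $\mathrm{C}^*$-norm on $A\otimes B$, being the restriction of the $\mathrm{C}^*$-norm of $\IB(\ell_2)\otimes_\alpha\IB(\ell_2)$ to the $\ast$-subalgebra $A\otimes B$.

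Functoriality and injectivity come out of the same mechanism. A pair of $\ast$-homomorphisms $A_0\to A_1$, $B_0\to B_1$ can be realized, after representing everything in $\IB(\ell_2)$, as restrictions of completely positive maps on $\IB(\ell_2)$ (Arveson again), so $\alpha$-tensorizing them in each leg shows that $\phi\otimes\psi$ is $\alpha$-contractive and hence extends to a $\ast$-homomorphism of the completions. When $A_0\hookrightarrow A_1$ and $B_0\hookrightarrow B_1$ are embeddings, one arranges the representations compatibly, so that $A_0\otimes_\alpha B_0\hookrightarrow A_1\otimes_\alpha B_1$ is simply the restriction of one and the same ambient norm and is therefore isometric; this is exactly injectivity in Kirchberg's sense. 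To pass from separable algebras to arbitrary $\mathrm{C}^*$-algebras I would take inductive limits: every $\mathrm{C}^*$-algebra is the directed union of its separable $\mathrm{C}^*$-subalgebras and a $\mathrm{C}^*$-norm is determined by its restrictions to them, so the separable definition extends uniquely to a functor on all $\mathrm{C}^*$-algebras.

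Finally, the map $\alpha\mapsto\otimes_\alpha$ is injective: any element of the algebraic tensor product $\IB(\ell_2)\otimes\IB(\ell_2)$ is a finite sum and hence lies in $A\otimes B$ for some separable $A,B\subset\IB(\ell_2)$, on which $\|\cdot\|_{\otimes_\alpha}$ agrees with $\alpha$ by construction; so the functor $\otimes_\alpha$ already determines $\alpha$ on the full algebraic tensor product, and distinct admissible norms give distinct functors. Applying this to the $2^{\aleph_0}$ admissible norms of Theorem~\ref{t1} yields $2^{\aleph_0}$ distinct injective tensor product functors. The main obstacle is the representation-independence in the second paragraph: everything hinges on admissibility — completely positive tensorization in \emph{both} legs, the second coming from flip-invariance — married to Arveson's extension theorem, which is precisely what allows an abstract norm on the single algebra $\IB(\ell_2)\otimes\IB(\ell_2)$ to be spread out into a coherent assignment on all pairs.
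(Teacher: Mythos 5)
Your proof is correct, and its engine---admissibility combined with Arveson/Wittstock extension, used to spread the single norm $\alpha$ on $\IB(\ell_2)\otimes\IB(\ell_2)$ coherently over all pairs of algebras---is the same as the paper's; the difference is the scale at which you localize. The paper defines $\|z\|_\alpha$ for $z\in A\otimes B$ by placing $z$ inside $E\otimes F$ with $E\subset A$, $F\subset B$ \emph{finite-dimensional} operator subspaces, after noting that admissibility makes the $\alpha$-norm on $E\otimes F$ depend only on the completely isometric classes of $E$ and $F$: two embeddings of $E$ into $\IB(\ell_2)$ differ by a complete isometry, which extends to a completely bounded map on $\IB(\ell_2)$ of the same cb norm, and admissible norms tensorize such maps isometrically (this is the remark preceding the lemma). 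With that formulation, injectivity of the functor is tautological---the norm of an element never sees the ambient $\mathrm{C}^*$-algebras---and no separability or inductive-limit step is needed, since every element of any algebraic tensor product already lies in some $E\otimes F$. Your localization at separable $\mathrm{C}^*$-subalgebras represented on $\ell_2$ obliges you to prove representation-independence via UCP extensions, to arrange compatible representations to obtain injectivity, and to pass to non-separable algebras by directed unions of separable subalgebras; all three steps are sound, but they are exactly the bookkeeping that the finite-dimensional viewpoint eliminates. In compensation, your write-up makes explicit two points the paper leaves implicit: functoriality for arbitrary $*$-homomorphisms (via the cb-map form of admissibility), and the injectivity of the assignment $\alpha\mapsto\otimes_\alpha$, i.e.\ that the functor evaluated on separable pairs inside $\IB(\ell_2)$ recovers $\alpha$, so the continuum of admissible norms from the theorem really yields a continuum of distinct injective functors.
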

\begin{proof}
Let $\alpha$ be an admissible $\mathrm{C}^*$-norm $\|\,\cdot\,\|_\alpha$ on $\IB(\ell_2)\otimes\IB(\ell_2)$.
We extend it to a tensor product functor. 
For every finite dimensional operator spaces $E$ and $F$, the norm $\|\,\cdot\,\|_\alpha$ is 
unambiguously defined via embeddings $E\hookrightarrow \IB(\ell_2)$ and $F\hookrightarrow \IB(\ell_2)$.
For every $\mathrm{C}^*$-alge\-bras $A$ and $B$ and $z\in A\otimes B$, we find finite dimensional 
operator subspaces $E$ and $F$ such that $z\in E\otimes F$ and define $\|z\|_\alpha$ to be 
the $\alpha$-norm of $z$ in $E\otimes F$. 
\end{proof}

\section{ $\mathrm{C}^*$-norms on  $\IB \otimes \IB(\ell_2)$ or  $\IB \otimes M$}

\newcommand{\n}{\noindent}
\newcommand{\ms}{\medskip}
\newcommand{\vp}{\varepsilon}
\newcommand{\bb}[1]{\mathbb{#1}}
\newcommand{\cl}[1]{\mathcal{#1}}
\newcommand{\sst}{\scriptstyle}
\newcommand{\ovl}{\overline}
\newcommand{\intl}{\int\limits}

\newtheorem*{rmk}{Remark}



\def\tilde{\widetilde}

\renewcommand{\tilde}{\widetilde}

\newcommand{\spar}{\begin{rotate}{90})\end{rotate}}
\newcommand{\sidpar}{\begin{rotate}{90}\Bigg)\end{rotate}}

\def\RR{\bb R}
\def\CC{\bb C}
\def\KK{\bb K}
\def\E{\bb E}
\def\F{\bb F}
\def\P{\bb P}
\def\T{\bb T}

\def\d{\delta}
\def\NN{\bb N}
\def\RR{\bb R}
\def\PP{\bb P}
\def\CC{\bb C}
\def\KK{\bb K}
\def\a{\alpha}
\def\o{\omega}

Let  $(N(m))$ be any   sequence of positive integers tending to $\infty$ and let
$$B=\prod_m M_{N(m)}.$$
  Actually, the existence
     of a continuum of distinct $\mathrm{C}^*$-norms on $B\otimes B$ can be proved very simply,
     as a consequence of \cite{JP}. 
     \begin{lem}\label{tak2}
     Let $M$ be any $\mathrm{C}^*$-algebra such that $B\otimes_{\min} M\not= B\otimes_{\max} M$. Then there is a continuum of distinct $\mathrm{C}^*$-norms on $B\otimes M$.\end{lem}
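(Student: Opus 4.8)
The plan is to produce the continuum of norms by a \emph{coordinatewise} construction. Fix once and for all a partition $\IN=\bigsqcup_{k\ge 1}S_k$ into infinitely many infinite sets, and write $B_S=\prod_{m\in S}M_{N(m)}$ for $S\subseteq\IN$, so that each $B_{S_k}$ is a central direct summand of $B$ and $B=\prod_k B_{S_k}$. Let $\pi_k\colon B\to B_{S_k}$ denote the corresponding (central) coordinate projection. For every sequence $\gamma=(\gamma_k)_k$ with $\gamma_k\in\{\min,\max\}$ I would set, for $z\in B\otimes M$,
\[
\|z\|_\gamma=\sup_k \gamma_k\big((\pi_k\otimes\id)(z)\big),
\]
the supremum of the pullbacks to $B\otimes M$ of the chosen norm on each $B_{S_k}\otimes M$. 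The goal is then to check that $\gamma\mapsto\|\cdot\|_\gamma$ is injective on $\{\min,\max\}^{\IN}$, which yields $2^{\aleph_0}$ distinct $\mathrm{C}^*$-norms.

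Two routine verifications carry most of the construction. First, $\|\cdot\|_\gamma$ is a genuine $\mathrm{C}^*$-norm with $\|\cdot\|_{\min}\le\|\cdot\|_\gamma\le\|\cdot\|_{\max}$: each $z\mapsto\gamma_k((\pi_k\otimes\id)(z))$ is a $\mathrm{C}^*$-seminorm (being the pullback of a $\mathrm{C}^*$-norm along the $*$-homomorphism $\pi_k\otimes\id$), the supremum is finite and is again a $\mathrm{C}^*$-seminorm, and it dominates $\|z\|_{\min}=\sup_k\|(\pi_k\otimes\id)(z)\|_{\min}$, hence is a norm; the bound by $\|\cdot\|_{\max}$ follows from functoriality of $\max$. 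Second, for distinctness, suppose $\gamma$ and $\gamma'$ differ at $k_0$, say $\gamma_{k_0}=\min$ and $\gamma'_{k_0}=\max$. If there is $w\in B_{S_{k_0}}\otimes M$ with $\|w\|_{\min}<\|w\|_{\max}$, I push it into the $k_0$-th summand to get $z\in B\otimes M$ with $(\pi_{k_0}\otimes\id)(z)=w$ and $(\pi_k\otimes\id)(z)=0$ for $k\ne k_0$; then $\|z\|_\gamma=\|w\|_{\min}<\|w\|_{\max}=\|z\|_{\gamma'}$, so the two norms differ.

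The main obstacle --- and the only real point --- is the input used in the last step: one must know that $\|\cdot\|_{\min}\ne\|\cdot\|_{\max}$ on $B_{S_k}\otimes M$ for \emph{every} block, not merely on $B\otimes M$. This cannot be deduced formally from the hypothesis alone, since the min--max gap for $B$ is an asymptotic phenomenon that is invisible on any finite set of coordinates (each $M_{N(m)}$ being nuclear). The resolution is to observe that the hypothesis forces $M$ to be non-nuclear (a nuclear $M$ would give $\|\cdot\|_{\min}=\|\cdot\|_{\max}$ on $B\otimes M$), and then to invoke the fact, which is what makes the lemma ``immediate from \cite{JP}'', that any algebra of the form $\prod_k M_{N'(k)}$ with $N'(k)\to\infty$ detects nuclearity: $X\otimes_{\min}M=X\otimes_{\max}M$ if and only if $M$ is nuclear. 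Since $N(m)\to\infty$ along every infinite $S$, each $B_{S_k}$ is of this form, so $M$ non-nuclear gives the required gap on every block. If one prefers to reduce to the standard statement for $\IB=\prod_n M_n$, one chooses inside each $B_{S_k}$ a corner isomorphic to $\IB$ (a block-diagonal sequence of corners $M_n\subset M_{N(m_n)}$ with $m_n\in S_k$, $N(m_n)\ge n$) and uses that $\|\cdot\|_{\min}=\|\cdot\|_{\max}$ passes to corners together with the classical fact that $\IB$ detects nuclearity; this supplies the defect element $w$ in $\IB\otimes M\subset B_{S_k}\otimes M$.
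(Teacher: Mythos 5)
Your blockwise construction is sound, and it is in essence the paper's own device: the paper uses the norms $\gamma_s(x)=\max\{\|x\|_{\min},\|(q_s\otimes\id)(x)\|_{B_s\otimes_{\max}M}\}$ indexed by an almost disjoint family of infinite subsets $s\subset\IN$, while you mix $\min$ and $\max$ along a fixed partition; your verifications that each $\|\cdot\|_\gamma$ is a $\mathrm{C}^*$-norm and that injectivity of $\gamma\mapsto\|\cdot\|_\gamma$ reduces to a min/max gap on each block $B_{S_k}\otimes M$ are correct. The genuine gap is in how you justify that last input. You discard the actual hypothesis ($B\otimes_{\min}M\neq B\otimes_{\max}M$) in favor of the weaker statement that $M$ is non-nuclear, and then invoke the claim that any $X=\prod_k M_{N'(k)}$ with $N'(k)\to\infty$ ``detects nuclearity'', i.e.\ $X\otimes_{\min}M=X\otimes_{\max}M$ iff $M$ is nuclear. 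That claim is \emph{false} for general $\mathrm{C}^*$-algebras $M$, and the lemma is stated for a general $\mathrm{C}^*$-algebra. Indeed, by Kirchberg's theorem \cite{Kinv}, $\mathrm{C}^*(\IF_\infty)\otimes_{\min}\IB(H)=\mathrm{C}^*(\IF_\infty)\otimes_{\max}\IB(H)$; compressing by the conditional expectation of $\IB(H)$ onto the block-diagonal copy of $X$ gives $X\otimes_{\min}\mathrm{C}^*(\IF_\infty)=X\otimes_{\max}\mathrm{C}^*(\IF_\infty)$, although $\mathrm{C}^*(\IF_\infty)$ is not nuclear. Detection of nuclearity by $\IB$ is a theorem about \emph{von Neumann} algebras $M$ (\cite{Wa} combined with \cite{JP}); for $\mathrm{C}^*$-algebras the lemma's hypothesis is strictly stronger than non-nuclearity, which is exactly why the lemma carries that hypothesis. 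Both of your suggested routes (the detection principle for the $B_{S_k}$, and the reduction to ``$\IB$ detects nuclearity'' via corners) founder on this same false statement.

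The fix is short, and it is precisely the paper's argument: transfer the hypothesis itself to each block, not nuclearity. Since $S_k$ is infinite and $N(m)\to\infty$, choose an injection $\phi\colon\IN\to S_k$ with $N(\phi(m))\ge N(m)$ and embed each $M_{N(m)}$ as a corner of $M_{N(\phi(m))}$; this yields an embedding $B\hookrightarrow B_{S_k}$ whose image is the range of a completely positive conditional expectation. Composing $\id_M$ with this expectation shows that $B\otimes_{\max}M\to B_{S_k}\otimes_{\max}M$ is isometric, and $\min$ is always injective; hence any $w_0\in B\otimes M$ with $\|w_0\|_{\min}<\|w_0\|_{\max}$ (which exists by hypothesis) has image $w\in B_{S_k}\otimes M$ with $\|w\|_{\min}<\|w\|_{\max}$. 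This supplies the defect element in every block and completes your proof. The paper runs the same transfer contrapositively: if two of its norms $\gamma_s,\gamma_{s'}$ coincided, then $\min$ and $\max$ would agree on $\hat B_t\otimes M$ for the infinite set $t=s\setminus s'$, hence on $B\otimes M$ by this embedding-plus-expectation argument, contradicting the hypothesis.
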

      \begin{proof}
     For any infinite subset $s\subset \NN$ we can define a $\mathrm{C}^*$-norm $\gamma_s$
     on $B\otimes M$ by setting
     $$\gamma_s(x)=\max\{ \|x\|_{\min}, \|(q_s\otimes Id)(x)\|_{B_s\otimes_{\max} B}\},$$
     where $B_s=\prod_{m\in s} M_{N(m)}$ and where
     $q_s:\ B\to B_s$ denotes the canonical projection (which is a $*$-homomorphism). 
     Let $\hat B_s =B_s\oplus \{0\}\subset B$ be the corresponding ideal in $B$.
      We claim that if
     $s'\subset \NN$ is another infinite subset such that
     $s\cap s'=\phi$, or merely such that  $t=s\setminus s'$ is infinite,
     then $\gamma_s\not=\gamma_{s'}$. Indeed, otherwise we would find
     that the minimal  and maximal norms coincide on $\hat B_t\otimes M$,  
     and hence (since $B$ embeds in $\hat B_t$ and is
     the range of a unital completely positive projection)
       on $B\otimes M$, contradicting our assumption.      \end{proof}
     By
     \cite{JP} this gives a continuum $(\gamma_s)$ of distinct $\mathrm{C}^*$-norms on $B\otimes B$ or on $B\otimes M$ whenever $M$ is not nuclear.
     Apparently, producing     a family of cardinality $2^{2^{\aleph_0}}$ requires
     a bit more.
       
\begin{thm}\label{t2} There is a  
 family of cardinality $2^{2^{\aleph_0}}$ of mutually distinct (and hence inequivalent)
$\mathrm{C}^*$-norms on $M\otimes B$ for any von Neumann algebra $M $     that is not
nuclear.
\end{thm}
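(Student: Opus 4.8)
The plan is to refine the construction of Lemma~\ref{tak2} by replacing the single infinite block $s$ with an \emph{almost disjoint} family of blocks, and then to code arbitrary subsets of that family into distinct norms. First I would fix an almost disjoint family $\{s_i : i\in I\}$ of infinite subsets of $\IN$ with $|I|=2^{\aleph_0}$, so that $s_i\cap s_j$ is finite whenever $i\neq j$; such a family exists by a classical construction. For each subset $T\subseteq I$ I would set
\[
\gamma_T(x)=\max\Bigl\{\|x\|_{\min},\ \sup_{i\in T}\|(\mathrm{Id}\otimes q_{s_i})(x)\|_{M\otimes_{\max}B_{s_i}}\Bigr\},
\]
where $q_{s_i}\colon B\to B_{s_i}$ is the canonical projection of Lemma~\ref{tak2}. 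Since $I$ has cardinality $2^{\aleph_0}$, there are $2^{2^{\aleph_0}}$ such subsets $T$, so it suffices to show that each $\gamma_T$ is a $\mathrm{C}^*$-norm on $M\otimes B$ and that distinct $T$ yield distinct norms.

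That each $\gamma_T$ is a $\mathrm{C}^*$-norm should be routine. Every map $x\mapsto\|(\mathrm{Id}\otimes q_{s_i})(x)\|_{\max}$ is a $\mathrm{C}^*$-seminorm (the pullback of a $\mathrm{C}^*$-norm along the $*$-homomorphism $\mathrm{Id}\otimes q_{s_i}$), and all of these are dominated pointwise by $\|\cdot\|_{\max}$, so their supremum is finite everywhere. A pointwise, everywhere-finite supremum of $\mathrm{C}^*$-seminorms is again a $\mathrm{C}^*$-seminorm, since $t\mapsto t^2$ is increasing and commutes with suprema, so the $\mathrm{C}^*$-identity is preserved; submultiplicativity and the $*$-invariance pass to the supremum just as easily. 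Finally $\gamma_T\geq\|\cdot\|_{\min}$ forces $\gamma_T$ to be a genuine norm.

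The heart of the matter, and the step where care is needed, is separation. Given $T\neq T'$, choose $i_0\in T\setminus T'$. Since $s_{i_0}$ is infinite and $N(m)\to\infty$, the von Neumann algebra $B_{s_{i_0}}$ is of unbounded degree, hence non-nuclear by Wassermann's criterion; as $M$ is non-nuclear as well, \cite{JP} supplies an element $x_0\in M\otimes \hat B_{s_{i_0}}\subseteq M\otimes B$ with $\|x_0\|_{\min}<\|x_0\|_{\max}$. Because $i_0\in T$ and $q_{s_{i_0}}$ restricts to the identity on $\hat B_{s_{i_0}}$, I obtain $\gamma_T(x_0)\geq\|x_0\|_{M\otimes_{\max}B_{s_{i_0}}}=\|x_0\|_{\max}$. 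For the other norm, I would exploit almost disjointness: for every $j\in T'$ one has $j\neq i_0$, so $q_{s_j}$ carries $\hat B_{s_{i_0}}$ into the finite block $\prod_{m\in s_j\cap s_{i_0}}M_{N(m)}$, which is nuclear. Using the projectivity of the maximal norm (Remark~\ref{eas}) to identify the maximal tensor product over this block with its image inside $M\otimes_{\max}B_{s_j}$, and then $\min=\max$ on the nuclear block, I get $\|(\mathrm{Id}\otimes q_{s_j})(x_0)\|_{\max}=\|(\mathrm{Id}\otimes q_{s_j})(x_0)\|_{\min}\leq\|x_0\|_{\min}$. Hence $\gamma_{T'}(x_0)=\|x_0\|_{\min}<\|x_0\|_{\max}\leq\gamma_T(x_0)$, so $\gamma_T\neq\gamma_{T'}$.

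The main obstacle is precisely this last computation, and almost disjointness is what makes it go through: it guarantees that the overlaps $s_j\cap s_{i_0}$ are finite, hence nuclear, so that the ``maximal defect'' carried by $x_0$ is detected only by the coordinate $i_0$ itself and is invisible to every $q_{s_j}$ with $j\neq i_0$. This yields $2^{2^{\aleph_0}}$ mutually distinct $\mathrm{C}^*$-norms. Since $M\otimes B$ has cardinality $2^{\aleph_0}$, there are at most $(2^{\aleph_0})^{2^{\aleph_0}}=2^{2^{\aleph_0}}$ real-valued functions on it, so this family is of maximal possible size and the cardinality is in fact \emph{equal} to $2^{2^{\aleph_0}}$.
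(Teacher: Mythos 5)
Your proof is correct, and it takes a genuinely different route from the paper's. The paper proves Theorem \ref{t2} by attaching to each free ultrafilter $\omega$ on $\IN$ the norm $\alpha_\omega(t)=\max\{\|t\|_{\min},\ \|(\pi_\omega\otimes\mathrm{Id})(t)\|_{M_\omega\otimes_{\max}\bar B}\}$, where $\pi_\omega$ is the GNS representation associated to the ultrafilter trace; distinctness of $\alpha_\omega$ and $\alpha_{\omega'}$ is witnessed by the explicit elements $t(s,s')=\sum_k u_k(s)\otimes\overline{u_k(s')}$ and rests on the quantitative random-matrix fact $C(n)<n$ together with a distribution-convergence and conditional-expectation argument (Lemma \ref{lem1}); one then passes from $B\otimes\bar B$ to $B\otimes\bar M$ via Wassermann's embedding $B\subset M$ and injectivity of $B$. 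You instead amplify Lemma \ref{tak2} combinatorially: an almost disjoint family $\{s_i\}_{i\in I}$ with $|I|=2^{\aleph_0}$ plus a coding of arbitrary subsets $T\subseteq I$ into norms $\gamma_T$, with separation coming only from the qualitative consequence of \cite{JP} (that $\min\neq\max$ on $M\otimes B_{s_{i_0}}$ since both factors are non-nuclear von Neumann algebras), nuclearity of the finite overlap blocks, the ideal-isometry property of the max norm (Remark \ref{eas}), and functoriality of the min norm. Your two key steps check out: the supremum of C*-seminorms dominated pointwise by $\|\cdot\|_{\max}$ is indeed a C*-seminorm, and — the decisive point — each $q_{s_j}$ with $j\neq i_0$ individually sees only a finite, hence nuclear, piece of the block carrying $x_0$, so the max-defect of $x_0$ cannot be detected by any coordinate of $T'$, no matter how much of $s_{i_0}$ the union $\bigcup_{j\in T'}s_j$ covers. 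What each approach buys: yours is more elementary and self-contained modulo \cite{JP}, avoiding ultraproducts, II$_1$ factors and the estimate $C(n)<n$ altogether, and it treats $M\otimes B$ directly for arbitrary non-nuclear $M$ without the conditional expectation step; the paper's construction yields norms with a cleaner analytic description and explicit witnesses with a uniform, quantitative gap ($C$ versus $n$), machinery that is of independent interest. Two cosmetic points: when $T\setminus T'=\emptyset$ you must instead pick $i_0\in T'\setminus T$ and swap the roles of $T$ and $T'$ (harmless by symmetry); and your closing remark that the family has the maximal possible size presupposes $|M\otimes B|=2^{\aleph_0}$, e.g. $M\subset\IB(\ell_2)$ as in the paper's own remark — for general $M$ the theorem only asserts existence of a family of cardinality $2^{2^{\aleph_0}}$, which your construction delivers in any case.
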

 \begin{rem} Assuming   $M\subset \IB(\ell_2)$ non-nuclear, we note that  
  the cardinality of $\IB(\ell_2)$ and hence of $M\otimes \IB(\ell_2)$ is $c=2^{\aleph_0}$,
  so the set of {\it all} real valued functions of $M\otimes \IB(\ell_2)$ into $\RR$
  has the same cardinal $2^{2^{\aleph_0}}$ as the set of
  $\mathrm{C}^*$-tensor norms.
     \end{rem}

\begin{rem} In the sequel, the complex conjugate $\bar a$ of a matrix $a$ in $M_N$ is 
meant in the usual way, i.e.\ $(\bar a)_{ij} = \ovl{a_{ij}}$.  In general, we 
will need to consider the conjugate $\bar A$ of a $\mathrm{C}^*$-algebra $A$. This is the 
same object but with the complex multiplication changed to $(\lambda,a)\to 
\bar\lambda a$, so that $\bar A$ is anti-isomorphic to $A$.
For any $a\in A$, we denote by $\bar a$ the same element
 viewed as an element of $\bar A$.
 Note that $\bar A$ 
can also be identified with the opposite $\mathrm{C}^*$-algebra $A^{op}$ which is defined 
as the same object but with the product changed to $(a,b)\to ba$. It is easy to 
check that the mapping  $\bar a\to a^*$ is a (linear) $*$-isomorphism from $\bar A$ to 
$A^{op}$.
The distinction between $A$ and $\bar A$ is necessary in general, but not when $A 
= \IB(H)$   since in that case, using $H\simeq \ovl H$,
we have $\ovl{\IB(H)} \simeq  \IB(\ovl H) \simeq
\IB(H)$, and in particular  $\ovl{M_N}  \simeq
M_N $. 
Note however that $H\simeq \ovl H$ depends
on the choice  of a basis so the isomorphism
$\ovl{\IB(H)}\simeq \IB(H)$ is not canonical.
\end{rem}

As in \cite{JP,P5},
our main ingredient will be the fact that the numbers $C(n)$ defined below
are smaller than $n$.
More precisely, it was proved in \cite{HT2} that $C(n)=2\sqrt{n-1}$ for any $n$.
However, it suffices to know for our present purpose that $C(n)<n$ for infinitely many $n$'s
or even merely for some $n$.
This can be proved in several ways for which we refer the reader
to \cite{JP} or  \cite{P4}. See also \cite{P6} for a more recent-somewhat more refined-approach.

For any integer $n\ge 1$, the constant 
$C(n)$ is defined as follows:\ $C(n)$ is the smallest constant $C$ such that for 
each $m\ge 1$, there is $N_m\ge 1$ and an $n$-tuple $[u_1(m), \ldots, u_n(m)]$ 
of unitary $N_m\times N_m$ matrices such that
\begin{equation}\label{eq2}
\sup_{m\ne m'} \left\|\sum^n_{i=1} u_k(m) \otimes\ovl{u_k(m')}\right\|_{\min} 
\le C.
\end{equation}

\n  \label{ga} Throughout the rest of  this note we fix $n>2$ and a constant $C<n$ and we assume given a sequence of 
$n$-tuples $[u_1(m), \ldots, u_n(m)]$ 
of unitary $N_m\times N_m$ matrices satisfying \eqref{eq2}.\\
By compactness (see e.g. \cite{P5}) we may assume (after passing to a subsequence) that
the $n$-tuples $[u_1(m), \ldots, u_n(m)]$ converge in distribution (i.e. in moments)
to an $n$-tuple $[u_1 , \ldots, u_n]$ of unitaries in a von Neumann algebra $M$ equipped
with a faithful normal trace $\tau$. In fact, 
if $\o$ is any ultrafilter refining the selected subsequence, we can take for $M,\tau$ the associated  ultraproduct $M_\o$ of the family
$\{M_{N(m)}\}$ ($m\to \infty$) equipped with normalized traces.

For any subset $s\subset \NN$ and any $1\le k\le n$ we denote by $u_k(s)=\oplus_m u_k(s)(m)$
the element of $B$ defined by $u_k(s)(m)=u_k(m)$ if $m\in s$ and $u_k(m)=0$ otherwise. 
 
Let $\tau_N$ denote the normalized trace on $M_N$.
To any free ultrafilter $\o$ on $\NN$  is associated a tracial state on $B$ defined for any $x=(x_m)\in B$ by
$\varphi_\o(x)=\lim_\o \tau_{N(m)} (x_m).$
The GNS construction applied to that state produces a representation
$$\pi_\o:\ B\to \IB(H_\o).$$
It is classical that $M_\o=\pi_\o(B)$ is a $II_1$-factor and that  $\varphi_\o$
allows to  define a trace $\tau_\o$ on $M_\o$ such that
$ \tau_\o (\pi_\o(b))=\varphi_\o(b)$ for any $ b\in B$.
\begin{rem}
Let $M$ be a finite von Neumann algebra. 
Then for any $n$-tuple $(u_1,\ldots, u_n)$ of unitaries  in $M$ 
\begin{equation}\label{eq0}
\left\|\sum\nolimits^n_1 u_k \otimes \bar u_k\right\|_{M\otimes_{\max} \ovl M} = 
\left\|\sum\nolimits^n_1 u_k\otimes u^*_k\right\|_{M \otimes_{\max} M^{op}} = 
n. \end{equation}
This is a well known fact. See e.g. \cite{BO} or \cite{P4}.
\end{rem}
\begin{lem}\label{lem1} Let $\o\not=\o'$. Consider disjoint subsets $s\subset \NN$ and $s'\subset \NN$
with  $s\in \o$ and $s'\in \o'$, and let 
$$t(s,s')= \sum\nolimits_{k=1}^n u_k(s)\otimes \overline{u_k(s')}\in B \otimes \bar B.$$
Then
 $$\| t(s,s') \|_{B\otimes_{\min} \ovl B}\le C \quad {\rm and}\quad \|[\pi_\o \otimes\ovl{ \pi_{\o'}}]( t(s,s')) \|_{M_{\o}
\otimes_{\max} \overline{M_{\o'} }}
=n.$$
\end{lem}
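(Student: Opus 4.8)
The plan is to prove the two assertions separately, since they rest on opposite features of the two norms, and I would begin with the minimal estimate by exploiting the block structure of $B=\prod_m M_{N(m)}$. Representing $B$ on $H=\bigoplus_m\IC^{N(m)}$ and $\overline B$ on $\overline H=\bigoplus_{m'}\overline{\IC^{N(m')}}$, the element $t(s,s')$ is diagonal with respect to $H\otimes\overline H=\bigoplus_{m,m'}\IC^{N(m)}\otimes\overline{\IC^{N(m')}}$: its $(m,m')$ block vanishes unless $m\in s$ and $m'\in s'$, in which case it equals $\sum_k u_k(m)\otimes\overline{u_k(m')}$. Hence $\|t(s,s')\|_{\min}$ is the supremum of the norms of these blocks. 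Since $s\cap s'=\varnothing$, every nonzero block has $m\neq m'$, and for such pairs \eqref{eq2} bounds the block norm by $C$. Therefore $\|t(s,s')\|_{B\otimes_{\min}\overline B}\le C$.

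For the maximal identity I would set $U_k=\pi_\omega(u_k(s))$ and $U_k'=\pi_{\omega'}(u_k(s'))$, so that $[\pi_\omega\otimes\overline{\pi_{\omega'}}](t(s,s'))=\sum_k U_k\otimes\overline{U_k'}$. First these must be recognized as unitaries: since $s\in\omega$, the projection $u_k(s)^*u_k(s)$ onto the coordinates indexed by $s$ has $\varphi_\omega$-value $1$, so $\pi_\omega$ maps it to $1$ and $U_k^*U_k=U_kU_k^*=1$; likewise for $U_k'$. Next, for any $*$-word $w$ one computes $\tau_\omega(w(U))=\lim_\omega\tau_{N(m)}(w(u(m)))$, the coordinates outside $s$ contributing nothing because $s\in\omega$. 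As $\omega$ and $\omega'$ refine the chosen convergence subsequence, both $(U_k)$ in $M_\omega$ and $(U_k')$ in $M_{\omega'}$ carry the limit $*$-distribution of $(u_1,\dots,u_n)$. Consequently there is a trace-preserving $*$-isomorphism $\Phi$ of $N_\omega:=W^*(U_1,\dots,U_n)$ onto $N_{\omega'}:=W^*(U_1',\dots,U_n')$ with $\Phi(U_k)=U_k'$.

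I would then produce the value $n$ inside the generated subalgebras. Applying \eqref{eq0} to the finite von Neumann algebra $N_\omega$ gives $\|\sum_k U_k\otimes\overline{U_k}\|_{N_\omega\otimes_{\max}\overline{N_\omega}}=n$, and transporting along the $*$-isomorphism $\id\otimes\overline\Phi$, which is isometric for the maximal norm, yields $\|\sum_k U_k\otimes\overline{U_k'}\|_{N_\omega\otimes_{\max}\overline{N_{\omega'}}}=n$. The bound $\le n$ for the full algebras is immediate, as each summand is a product of unitaries.

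The hard part will be passing from the subalgebras back to $M_\omega$ and $\overline{M_{\omega'}}$: because the maximal norm is not injective, the inclusion $N_\omega\otimes_{\max}\overline{N_{\omega'}}\to M_\omega\otimes_{\max}\overline{M_{\omega'}}$ is only a priori contractive, which is the wrong direction for a lower bound. I would resolve this using finiteness. Since $M_\omega$ and $\overline{M_{\omega'}}$ are finite von Neumann algebras, there are trace-preserving conditional expectations $E\colon M_\omega\to N_\omega$ and $\overline E\colon\overline{M_{\omega'}}\to\overline{N_{\omega'}}$; these are unital completely positive, so by the functoriality of $\otimes_{\max}$ the map $E\otimes\overline E$ is a unital completely positive, hence contractive, left inverse to the inclusion on the algebraic tensor product. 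Thus the inclusion is isometric on $N_\omega\otimes\overline{N_{\omega'}}$, giving $\|\sum_k U_k\otimes\overline{U_k'}\|_{M_\omega\otimes_{\max}\overline{M_{\omega'}}}=n$ and completing the proof.
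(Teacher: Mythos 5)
Your proof is correct and follows essentially the same route as the paper's: the min bound via the block supremum over $(m,m')\in s\times s'$ with disjointness forcing $m\neq m'$, and the max identity via equality of limit distributions, a trace-preserving isomorphism between the generated von Neumann subalgebras, trace-preserving conditional expectations, functoriality of $\otimes_{\max}$ under unital completely positive maps, and \eqref{eq0}. The only (cosmetic) difference is bookkeeping: the paper pushes the element forward by the single u.c.p.\ map $Q=\pi P$ into $M(u)\otimes_{\max}\overline{M(u)}$, whereas you first compute inside $N_\omega\otimes_{\max}\overline{N_{\omega'}}$ and then show the inclusion into $M_\omega\otimes_{\max}\overline{M_{\omega'}}$ is isometric using the u.c.p.\ left inverse $E\otimes\overline{E}$ --- the same ingredients arranged in the opposite order.
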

\begin{proof}    We have obviously
\[
\|t\|_{\min} = \sup_{ {\sst (m,m')\in s\times  s'}} \left\|\sum u_k(m) 
\otimes \ovl{u_k(m')}\right\|
\]
hence $\|t\|_{\min}\le C$.   We now turn to the $\max$ tensor product. We follow \cite{P5}. \\
Let $u_k=\pi_\o(u_k(s))$
and 
$v_k=\pi_{\o'}(u_k(s'))$ so that 
   we 
have
\[
\|[\pi_\o \otimes \ovl{ \pi_{\o'}}]( t(s,s')) \|_{M_{\o}
\otimes_{\max} \overline{M_{\o'} }}
= \left\|\sum u_k\otimes \bar v_k\right\|_{M_{\o}
\otimes_{\max} \overline{M_{\o'} }}.
\]
Now, since we assume that $[u_1(m),\ldots, u_n(m)]$ converges in distribution, 
$(u_1,\ldots, u_n)$ and $(v_1,\ldots, v_n)$ must have the same distribution 
relative respectively to $\tau_{\o}$ and $\tau_{\o'}$. But this implies 
that there is a $*$-isomorphism $\pi$ from the von~Neumann algebra $M(v)\subset M_{\o'}$ 
generated by $(v_1,\ldots, v_n)$ to the one $M(u)\subset M_{\o}$ generated by $(u_1,\ldots, 
u_n)$, defined simply by $\pi(v_k) = u_k$. Moreover, since we are dealing here 
with {\em finite\/} traces, there is a conditional expectation $P$ from $M_{\o'}$ onto $M(v)$.
 Therefore the composition $Q = \pi P$ is a  unital completely 
positive map from $M_{\o'}$ to $M(u)$ such that $Q(v_k) = u_k$. Since such maps
preserve the max tensor products (see e.g. \cite{BO} or \cite{P4}) we have 
$$
\left\|\sum u_k \otimes \bar v_k\right\|_{\max} \ge \left\|\sum   u_k
\otimes \ovl{Q(v_k)} \right\|_{M(u) \otimes_{\max}\ovl{M(u)}}\\
= \left\|\sum u_k \otimes \bar u_k\right\|_{M(u) \otimes_{\max} \ovl{M(u)}}.
$$
But then by \eqref{eq0} we conclude that $\|t(s,s')\|_{\max} = n$.
 \end{proof}
  
For any free ultrafilter ${\o}$  on $\NN$ we denote by $\a_{\o}$
the norm defined on $B\otimes \bar B$ by
$$\forall t\in B\otimes \bar B \qquad \a_{\o}(t)=\max\{\|t\|_{B\otimes_{\min} \bar B}, \|[\pi_\o\otimes Id](t)\|_{M_\o   \otimes_{\max} \ovl{B}}\}.$$
\begin{thm} There is a  family of cardinality $2^{2^{\aleph_0}}$ of mutually distinct (and hence inequivalent)
$\mathrm{C}^*$-norms on $B\otimes \bar B$. More precisely,
the family $\{\a_{\o }\}$ indexed by  free ultrafilters on $\NN$
   is such a family on
$B\otimes \bar B$.
\end{thm}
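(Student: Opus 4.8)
The plan is to prove that the assignment $\omega\mapsto\alpha_\omega$ is \emph{injective} on the set of free ultrafilters on $\IN$. Recall that there are exactly $2^{2^{\aleph_0}}$ free ultrafilters on $\IN$ (Pospíšil's theorem), while, as observed in the Remark following Theorem~\ref{t2}, $B\otimes\bar B$ has cardinality $\mathfrak c$, so there are at most $\mathfrak c^{\mathfrak c}=2^{2^{\aleph_0}}$ real-valued functions on $B\otimes\bar B$, and a fortiori at most that many $\mathrm{C}^*$-norms. Hence injectivity of $\omega\mapsto\alpha_\omega$ immediately produces a family of the required cardinality $2^{2^{\aleph_0}}$. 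Before that, I would record that each $\alpha_\omega$ is genuinely a $\mathrm{C}^*$-norm: indeed $\alpha_\omega(t)=\|(\iota\oplus(\pi_\omega\otimes\id))(t)\|$, where $\iota\colon B\otimes\bar B\to B\otimes_{\min}\bar B$ is the canonical map and the norm is computed in the $\mathrm{C}^*$-algebra $(B\otimes_{\min}\bar B)\oplus(M_\omega\otimes_{\max}\bar B)$. Being the pull-back of a $\mathrm{C}^*$-norm along a $*$-homomorphism, $\alpha_\omega$ is a $\mathrm{C}^*$-seminorm, and it is a norm because it dominates $\|\cdot\|_{\min}$.

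For injectivity, fix $\omega\neq\omega'$ and choose $A\in\omega$ with $A\notin\omega'$; put $s=A$ and $s'=\IN\setminus A$, so that $s\in\omega$, $s'\in\omega'$ and $s\cap s'=\emptyset$. I then evaluate both norms on the \emph{single} element $t=t(s,s')=\sum_{k=1}^n u_k(s)\otimes\overline{u_k(s')}$. On the one hand, applying the $*$-homomorphism $\id_{M_\omega}\otimes\overline{\pi_{\omega'}}\colon M_\omega\otimes_{\max}\bar B\to M_\omega\otimes_{\max}\overline{M_{\omega'}}$ can only decrease the maximal norm, so by Lemma~\ref{lem1}
\[
\|(\pi_\omega\otimes\id)(t)\|_{M_\omega\otimes_{\max}\bar B}\ \ge\ \|(\pi_\omega\otimes\overline{\pi_{\omega'}})(t)\|_{M_\omega\otimes_{\max}\overline{M_{\omega'}}}=n,
\]
whence $\alpha_\omega(t)\ge n$. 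On the other hand, each $u_k(s)$ is supported on $s$, and since $s\notin\omega'$ we have $\IN\setminus s\in\omega'$; therefore $\varphi_{\omega'}\big(u_k(s)^*u_k(s)\big)=\lim_{\omega'}\tau_{N(m)}\big((u_k(s)^*u_k(s))_m\big)=0$, the entries vanishing off $s$, and faithfulness of the GNS construction forces $\pi_{\omega'}(u_k(s))=0$ for every $k$. Consequently $(\pi_{\omega'}\otimes\id)(t)=0$ and, using Lemma~\ref{lem1} once more, $\alpha_{\omega'}(t)=\|t\|_{\min}\le C$. Since $C<n$, we obtain $\alpha_\omega(t)\ge n>C\ge\alpha_{\omega'}(t)$, so $\alpha_\omega\neq\alpha_{\omega'}$.

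The heart of the argument, and the step I expect to be the most delicate, is precisely this asymmetry of the test element $t(s,s')$: it must be large for $\alpha_\omega$ yet small for $\alpha_{\omega'}$. The largeness comes from Lemma~\ref{lem1} (itself resting on \eqref{eq0} together with the conditional-expectation/trace-preservation argument), transported to $M_\omega\otimes_{\max}\bar B$ through a $*$-homomorphism; the smallness comes from the fact that $\omega'$ annihilates the block $u_k(s)$ because $s\notin\omega'$, leaving only the minimal norm, which is $\le C$ by \eqref{eq2}. Everything else—the count of free ultrafilters and the cardinality upper bound on the set of $\mathrm{C}^*$-norms—is standard, so once the two estimates above are in place the theorem follows at once.
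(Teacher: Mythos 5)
Your proposal is correct and follows essentially the same route as the paper's proof: the same test elements $t(s,s')$ built from disjoint sets $s\in\omega$, $s'\in\omega'$, the lower bound $\alpha_\omega(t(s,s'))\ge n$ via Lemma~\ref{lem1} and functoriality of the max norm under $\id\otimes\overline{\pi_{\omega'}}$, the vanishing $(\pi_{\omega'}\otimes\id)(t(s,s'))=0$ giving $\alpha_{\omega'}(t(s,s'))\le C<n$, and the count of free ultrafilters. Your write-up merely makes explicit several steps the paper leaves implicit (that $\alpha_\omega$ is a $\mathrm{C}^*$-norm via the direct-sum representation, the construction of disjoint $s,s'$, and the trace argument forcing $\pi_{\omega'}(u_k(s))=0$), which is fine.
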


 \begin{proof} Let $(\o,\o')$   be two distinct   free ultrafilters on $\NN$.
  Let $s \subset \NN$ and $s'\subset \NN$ be disjoint subsets such that
  $s\in \o $ and  $s'\in \o'$. 
  By Lemma \ref{lem1} we have
  $$\a_{ \o} (t(s,s'))\ge \|[\pi_\o \otimes\ovl{ \pi_{\o'}}]( t(s,s')) \|_{M_{\o}
\otimes_{\max} \overline{M_{\o'} }}
 =n$$
  but   since $(\pi_{\o'}\otimes Id)(t(s,s'))=0$
  we have $\a_{ \o'} (t(s,s'))\le C<n$. This shows  $\a_{ \o}$ and $ \a_{ \o'} $
   are different,
  and hence (automatically for $\mathrm{C}^*$-norms) inequivalent.
Lastly, it is well known (see e.g. \cite[p. 146]{CN}) that the cardinality of the set of free ultrafilters on $\NN$ is    
  $2^{2^{\aleph_0}}$.
   \end{proof}   
   
 \begin{proof}[Proof of Theorem \ref{t2}]
 If $M$  is not nuclear, by \cite[Cor.1.9]{Wa} there is an embedding $B\subset M$.
  Moreover, since $B$ is injective, there is a  conditional expectation
 from $M$   to $B$, which guarantees that, for any $A$,
 the max norm on $A\otimes\bar B$ coincides with the restriction of   the max norm on $A\otimes\bar M$. Thus
 we can extend $\alpha_\o$ to a $\mathrm{C}^*$-norm $\tilde \alpha_\o$
   on $B\otimes \bar M$ by setting
$$\forall t \in B\otimes \bar M \qquad \tilde \a_{\o}(t)=\max\{\|t\|_{B\otimes_{\min} \bar M}, \|[\pi_\o\otimes Id](t)\|_{M_\o   \otimes_{\max} \bar{M}}\}.$$
Of course we can replace $M$ by $\bar M$.
 \end{proof}
 \begin{rem} It is easy to see that Theorem \ref{t2} remains valid for any choice of the sequence $(N(m))$
 and in particular it holds if $N(m)=m$ for all $m$, i.e. for  $B=\IB$.
 \end{rem}
 \section{Additional remarks}

  \begin{rem}\label{tak}  Let $G$ be a discrete group such   that its reduced $\mathrm{C}^*$-algebra $A$ is simple.
  We can associate to any  unitary representation $\pi:\ G\to \IB(H_\pi)$
  a $\mathrm{C}^*$-norm $\alpha_\pi$ on $A\otimes A$ as follows.
  Let $\lambda:\ A\to \IB(\ell_2(G)) $ and $\rho:\ A\to \IB(\ell_2(G)) $
  be the left and right regular representations of $G$ linearly extended to $A$.
  This gives us a pair of commuting representations of $A$
on $\ell_2(G)$.  
By the Fell absorption principle (see e.g. \cite[p. 44]{BO} or \cite[p. 149]{P4})
the representation $\pi\otimes \lambda:\ G \to \IB(H_\pi\otimes \ell_2(G))$ is unitarily equivalent to
$I\otimes \lambda$, and hence (since $A$ is assumed simple) it extends to a faithful representation on $A$.
Similarly $I\otimes \rho:\ G \to \IB(H_\pi\otimes \ell_2(G))$ extends to a faithful representation on $A$.
   We define
  $$\forall a,b\in A\times A\quad  \tilde\pi(a\otimes b) =(\pi\otimes \lambda)(a) .(I\otimes \rho)(b) ,$$
  and we denote by $ \tilde\pi$ the canonical extension to $A\otimes A$.
Then for any $x\in A\otimes A$ we set
  $$\alpha_\pi(x)= \| \tilde\pi(x)\|.$$
By Remark \ref{R1}, this is a $\mathrm{C}^*$-\emph{norm} on $A\otimes A$. However,
if we restrict it to the diagonal subalgebra $D\subset A\otimes A$ spanned by $\{\lambda(t)\otimes \lambda(t)\mid t\in G\}$ we  find   for any $x=\sum x(t) \lambda(t)\otimes \lambda(t)$
$$ \| \tilde\pi(x)\|= \|\sum x(t) \pi(t) \otimes \sigma(t)\|$$
where $\sigma(t) \delta_s= \delta_{tst^{-1}}$.

Now, if $G$ is  any non-Abelian free group, $\sigma$
is weakly equivalent to $1\oplus \lambda$ (see \cite{Boz}),
so we have for any such diagonal $x$ (using again $\pi\otimes \lambda\simeq I\otimes \lambda$)
\begin{equation}\label{eq10} \| \tilde\pi(x)\|= \max\{ \|\sum x(t) \pi(t)\| ,  \|\sum x(t) \lambda(t)\|\}. \end{equation}
But it is known (see \cite{Sz}) that there is a continuum of
unitary representations   on a non-Abelian free group   $G$
that are ``intermediate" between $\lambda$ and the universal unitary representation of $G$.
More precisely,  
let $G=\F_k$ be the free group with $k>1$ generators $g_1,\cdots,g_k$.
Let $S_k= \sum_1^k \delta_{g_j} + \delta_{g^{-1}_j}$.
By  \cite[Th. 5]{Sz},
for any number $r\in ((2k-1)^{-1/2}, 1)$,  $G$ admits a unitary representation  
    $\pi_r $ 
  such that
  $$\|\pi_r(S_k)\|= (2k-1)r+ 1/r> 2\sqrt{2k-1}.$$
  By \eqref{eq10}   we have
  $$\|\tilde\pi_r(S_k)\|= (2k-1)r+ 1/r,$$
  and hence  if we define  $x_k=\sum \lambda(g_k) \otimes \lambda(g_k)\in A\otimes A$
  we find
  $$\alpha_{\pi_r} (x_k)=(2k-1)r+ 1/r$$
  which shows that the family of $\mathrm{C}^*$-norms  $\{\alpha_{\pi_r}\mid (2k-1)^{-1/2}< r < 1\}$
  are mutually distinct.
  Thus we obtain in this case a continuum of distinct $\mathrm{C}^*$-norms on $A\otimes A$.\\
  Let $M$ denote the von Neumann algebra generated by $A$ in $\IB(\ell_2(G))$.
  Since $G$ is i.c.c. $M$ is a finite factor and hence (see \cite[p. 349]{Tak})
  is a simple $\mathrm{C}^*$-algebra, thus again automatically central simple. The representation
  $\tilde\pi$ clearly extends to a $*$-homomorphism on $M\otimes M$ which
  is isometric when restricted either to $M\otimes 1$ or $1\otimes M$. 
  Thus we also obtain a  continuum of distinct $\mathrm{C}^*$-norms on $M\otimes M$,
  extending the preceding ones on $A\otimes A$.



     \end{rem} 
  \begin{rem}\label{was}   Let $I\subset A$ and $J\subset B$ be (closed two-sided)
  ideals in two arbitrary $\mathrm{C}^*$-algebras $A,B$.
  Assume that there is only one $\mathrm{C}^*$-norm both on $I\otimes B$ and on
  $A\otimes J$. Let $K=I\otimes_{\min} B+A\otimes_{\min}  J$.
 Then for any pair $\alpha,\beta$ of distinct $\mathrm{C}^*$-norms on $A\otimes B$,
 the quotient spaces $(A\otimes_\alpha B)/K$ must be different
 (note that $I\otimes_{\min} B$, $  A\otimes_{\min}  J  $ and hence  also $K$ are closed in both $A\otimes_\alpha B$ and $A\otimes_\beta B$).
 Therefore the  $\mathrm{C}^*$-norms naturally induced on $(A/I)\otimes (B/J)$
 are also distinct.
  
  For instance, for the Calkin algebra $Q(H)$, we deduce that there are at least $2^{\aleph_0}$ $\mathrm{C}^*$-norms on $Q(H)\otimes \IB(H)$ or
  on $Q(H)\otimes Q(H)$.
  \end{rem}

\subsection*{Acknowledgments} This research was carried out when the first author was staying
at Institut Henri Poincar\'e for the program ``Random Walks and
Asymptotic Geometry of Groups" in 2014. He gratefully
acknowledges the hospitality provided by IHP.
The second author is grateful to  Simon Wassermann for stimulating exchanges.

\end{document}